\newfont{\Bb}{msbm10 scaled1200}
\newcommand{\basering}[1]{\ensuremath{\mathbb{#1}}}
\newcommand{\CC}{\basering{C}}
\newcommand{\HH}{\basering{H}}
\DeclareMathOperator{\OO}{\mathcal O}
\newcommand\s{\mathscr}
\begin{document}
\def\diag{\text{diag}}
\def\adj{\text{adj}}
\def\alt{\text{Alt}}
\def\vs{\vskip 10pt}
\def\sign{\text{sign }}
\def\vsn{\vskip 10pt\ni}
\def\cc{\circ}
\def\y{\item}
\def\ni{\noindent}
\def\ben{\begin{enumerate}}
\def\een{\end{enumerate}}
\def\beq{\begin{equation}}
\def\ssm{\smallsetminus}
\def\eeq{\end{equation}}
\def\bit{\begin{itemize}}
\def\eit{\end{itemize}}
\def\ec{\end{center}}
\def\bc{\begin{center}}
\def\ld{{\ldots}}
\def\cd{{\cdots}}
\def\vd{\vdots}
\def\D{{\widetilde D}}
\def\DDD{\Delta}

\def\e{\mathbf{e}}
\def\Gl{\mbox{Gl}}
\def\pgl{\mathfrak{pgl}}
\def\gl{\mathfrak{gl}}
\def\gg{\mathfrak{g}}
\def\la{\langle}
\def\ra{\rangle}


\def\th{{\theta}}
\def\ve{{\varepsilon}}
\def\pp{{\varphi}}
\def\a{{\alpha}}
\def\l{{\lambda}}
\def\rr{{\rho}}
\def\g{{\gamma}}
\def\G{{\Gamma}}
\def\o{{\omega}}
\def\O{{\Omega}}
\def\k{{\kappa}}


\def\Proof{{\bf Proof}\hspace{.2in}}
\def\eop{\hspace*{\fill}$\Box$ \vskip \baselineskip}
\def\eoq{{\hfill{$\Box$}}}

\def\bu{\bullet}
\def\tr{{\pitchfork}}
\def\b{{\bullet}}
\def\w{{\wedge}}
\def\p{{\partial}}
\def\rel{{\text{rel}}}

\def\supp{\mbox{supp}}
\def\im{\mbox{im}}
\def\Der{\mbox{Der}}
\def\wh{\widehat}
\def\Mat{\mbox{Mat}}
\def\Rep{\mbox{Rep}}
\def\sgn{\mbox{sign}}
\def\cod{\mbox{codim }\ }
\def\hh{\mbox{height }\ }
\def\opp{\mbox{\scriptsize opp}}
\def\vol{\mbox{vol}}
\def\deg{\mbox{deg}}
\def\Sym{\mbox{Sym}}
\def\Ext{\mbox{Ext}}
\def\Hom{\mbox{Hom}}
\def\End{\mbox{End}}
\def\Aut{\mbox{Aut}}
\def\DD{{\Der(\log D)}}
\def\Dh{{\Der(\log h)}}
\def\pd{\mbox{pd}}
\def\dim{{\mbox{dim}\;}}
\def\depth{{\mbox{depth}}}
\def\coker{\text{Coker}}
\def\sa{{\text{skAdj}}}
\def\adj{{\text{Adj}}}
\def\Pf{\text{Pf}}
\def\im{\text{Im}}

\def\EE{{\cal E}}
\def\R{{\cal R}}
\def\L{{\cal L}}
\def\C{{\cal C}}
\def\A{{\cal A}}
\def\TT{{\cal T}}
\newcommand{\bbbz}{{\mathbb Z}}
\def\L{\Lambda}
\def\mm{\mathfrak{m}}

\newcommand{\n}[1]{\| #1 \|}
\newcommand{\um}[1]{{\underline{#1}}}
\newcommand{\om}[1]{{\overline{#1}}}
\newcommand{\fl}[1]{\lfloor #1 \rfloor}
\newcommand{\ce}[1]{\lceil #1 \rceil}
\newcommand{\ncm}[2]
{{\left(\!\!\!\begin{array}{c}#1\\#2\end{array}\!\!\!\right)}}
\newcommand{\ncmf}[2]
{{\left[\!\!\!\begin{array}{c}#1\\#2\end{array}\!\!\!\right]}}
\newcommand{\ncms}[2]
{{\left\{\!\!\!\begin{array}{c}#1\\#2\end{array}\!\!\!\right\}}}


\def\iff{{\ \Longleftrightarrow\ }}
\def\imp{{\Rightarrow}}
\def\to{{\ \rightarrow\ }}
\def\too{{\ \longrightarrow\ }}
\def\into{{\hookrightarrow}}
\def\st{\stackrel}
\def\vs{\vskip 10pt}
\def\TTT{\Theta_S(-\log D)}
\def\TT{\Theta_S}
\def\vt{\vartheta}
\def\SS{\Sigma}
\def\ppp{\\langle\,\_\,,\_\,\rangle}
\def\wt{\widetilde}
\def\bb{(\_,\_)}

\newtheorem{theorem}{Theorem}[section]
\newtheorem{lemma}[theorem]{Lemma}
\newtheorem{sit}[theorem]{}
\newtheorem{lemmadefinition}[theorem]{Lemma and Definition}
\newtheorem{proposition}[theorem]{Proposition}
\newtheorem{example}[theorem]{Example}
\newtheorem{question}[theorem]{Question}
\newtheorem{remark}[theorem]{Remark}
\numberwithin{equation}{section}
\newtheorem{corollary}[theorem]{Corollary}
\newtheorem{definition}[theorem]{Definition}
\newtheorem{conjecture}[theorem]{Conjecture}
\def\L{\Lambda}
\def\ad{\text{adj}}
\title{
Logarithmic vector fields and the Severi strata in the discriminant} 

\author{Paul Cadman}
\address{Mathematics Institute, University of Warwick, Coventry CV4 7AL, United Kingdom}
\email{pcadman@gmail.com}
\author{David Mond}
\address{Mathematics Institute, University of Warwick, Coventry CV4 7AL, United Kingdom}
\email{d.m.q.mond@warwick.ac.uk}
\author{Duco van Straten}
\address{Institut f\"ur Mathematik,
FB 08 - Physik, Mathematik und Informatik,
Johannes Gutenberg-Universit\"at,
Staudingerweg 9, 4. OG,
55128 Mainz}
\email{straten@mathematik.uni-mainz.de}
\date{\today}

\subjclass{14H20, 14B07 (Primary), 53D17 (secondary)}
\begin{abstract}The discriminant, $D$,  in the base of a miniversal deformation of an irreducible plane curve singularity, is partitioned according to the genus of the (singular) fibre, or, equivalently, by the sum of the delta invariants of the singular points of the fibre.
The members of the partition are known as the {\it Severi strata}. The smallest is the $\delta$-constant stratum,
$D(\delta)$, where the genus of the fibre is $0$. It is well known, by work of Givental' and Varchenko, to be Lagrangian with respect to the symplectic form $\Omega$ obtained by pulling back the intersection form on the cohomology of the fibre via the period mapping. We show that the remaining Severi strata are also co-isotropic with respect to $\Omega$, and moreover that the coefficients of the expression of $\Omega^{\wedge k}$ with respect to a basis of $\Omega^{2k}(\log D)$ are equations for $D(\delta-k+1)$, for $k=1,\ld,\delta$. 
These equations
allow us to show that for $E_6$ and $E_8$, $D(\delta)$ is Cohen-Macaulay (this was already shown by Givental' for $A_{2k}$), and that, as far as we can calculate,
for $A_{2k}$ all of the Severi strata are Cohen-Macaulay. Our construction also produces a canonical rank 2 maximal Cohen Macaulay module on the discriminant. \end{abstract}

\maketitle
\section{Introduction: the discriminant and its Severi strata}\label{severi}
Two of the most basic invariants of a plane curve singularity $(C,0)$ are its 
{\em Milnor number} $\mu$ and its {\em delta invariant} $\delta$. 
If $f:(\CC^2,0)\to (\CC,0)$ is a holomorphic map defining $(C,0)=f^{-1}(0)$, 
then $\mu(C)$ is the dimension of the jacobian algebra $\OO_{\CC^2,0}/J_f$ and equals the dimension of the vanishing cohomology. 
If $n:\widetilde{C} \too C$ denotes the normalisation of $(C,0)$, 
then $\delta(C)$ is the dimension $n_*\OO_{\widetilde{C}}/\OO_{C}$ and equals the
number of double points appearing in a generic perturbation of the map $n$. 
These invariants are related by the relation
\[ \mu=2\delta+r-1\]
where $r$ denotes the number of branches of $(C,0)$ (See \cite[pp 206-211]{gls} for this relation, and for other background on curve singularities).
The number $\mu$ also appears as the number of parameters of an ${\mathscr R}_e$ miniversal deformation $F:(\CC^2\times\CC^\mu,0)\to (\CC,0)$ of the function  $f:(\CC^2,0)\to (\CC,0)$ defining $(C,0)$.
The restriction $\pi: X:=F^{-1}(0)\to S=(\CC^\mu,0)$ is a versal deformation of the plane curve singularity $(C,0)$. The fibre $C_u$ over $u \in S$ is the curve defined by zero level of the deformed function $f_u:=F(\ ,u)$ and {\em discriminant} $D \subset S$ is the set of parameter values $u$ for which the fibre $C_u$ is singular. This set is stratified by the types of singularities that appear in the fibres. In this paper we will 
focus on the so-called  {\it Severi strata}, where the sum of the delta-invariants add up to a value $ \ge k$: 
$$D(k)=\{u\in S: \delta(C_u)\geq k\}$$
where $\delta(C_u)=\sum_{x\in C_u}\delta(C_u,x)$.
Clearly $D(0)=S$ and $D(1)=D$, and as $D(i)$ is contained in $D(i-1)$
we obtain a chain 
\[D(\delta) \subset D(\delta-1) \subset \ldots \subset D(1) \subset D(0)=S\]
The smallest non-empty Severi stratum, $D(\delta)$, is the classical 
{\it $\delta$-constant stratum}. The term ``stratum'' here is a bit of a 
misnomer, since the Severi strata are not in general smooth. 

It is a classical fact, going back at least to Cayley \cite{cayley}, that any 
curve singularity with $\delta=k$ can be deformed into a curve with precisely $k$ $A_1$ points, a fact which explains the name {\em virtual number of double points} for the $\delta$-invariant. For a very nice proof see the
paper by C. Scott \cite{scott}. Thus the set $D(kA_1)$ of parameter values $u$ for which $C_u$ has precisely 
$k$ $A_1$ singularities is {\em dense} in $D(k)$. Moreover, $D(k)$ is smooth at such points, for there, by openness of versality,  $D(k)$ is a normal crossing of $k$ local smooth components of the discriminant $D$. A curve singularity with $\delta$-invariant $k>1$ is also adjacent to a curve with one $A_2$ singularity and $k-1$ $A_1$ singularities.  Hence $D(k)_{\text{\scriptsize reg}}
=D(kA_1)$. We refer to \cite{Tei80} for more background on this.

In the famous Anhang F to his {\em Vorlesungen \"uber Algebraische Geometrie} \cite{Se21}, Severi considered the variety of plane curves of degree $d$ with a given number of nodes which he used to argue for the irreducibility of the space of all curves of a given genus. A complete argument along these lines with given much later by J. Harris, \cite{harris}, and by Harris and Diaz in \cite{DH88}, which started the interest in the local case. This seems to justify the name {\em Severi-strata} for the $D(k)$'s, which was introduced in \cite{shende} . Recently, these strata have been the subject of several papers and their
geometry appears to hide some great mysteries. In \cite{FGvS} the multiplicity of 
$D(\delta)$ was shown to be equal to the Euler number of the compactified Jacobian of 
$(C,0)$. This was further explored in \cite{shende}, where multiplicities of the 
other $D(k)$ were related to the puntual Hilbert-schemes $Hilb^n(C,0)$. Most surprisingly, these invariants turn out to be related to the HOMFLY-polynomial of the knot in the $3$-sphere 
defined by $(C,0)$, \cite{shendeoblomkov}. 

If the curve $(C,0)$ is irreducible, its Milnor fibre $C_u$ has just one boundary component, and it follows that the intersection form $I_u$ on $H^1(C_u; \CC)$ is non-degenerate. In \cite{gv82}, Givental' and Varchenko 
used the period map associated to a non-degenerate $1$-form $\eta$ on the total space of the Milnor fibration of 
$F$, together with the Gauss-Manin connection,  to pull back the intersection form from the cohomology bundle 
$\s H^*$ over $S$ to get a symplectic form $\Omega$ on $S\ssm D$, and proved
\begin{theorem} \ben
\y
$\O$ extends to a symplectic form on $S$, and 
\y
the $\delta$-constant stratum $D(\delta)$ in the discriminant is Lagrangian with respect to $\O$.
\een 
\end{theorem}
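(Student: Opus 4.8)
The plan is to build $\Omega$ on $S\ssm D$, where the period map is well-behaved, and to establish its two properties there, before extending across $D$. Write $s$ for the section of the cohomology bundle $\s H^1$ over $S\ssm D$ given by $s(u)=[\eta|_{C_u}]$, and let $\alpha=\nabla s$ be the associated $\s H^1$-valued $1$-form, $\alpha(\xi)=\nabla_\xi s$, where $\nabla$ is the Gauss--Manin connection. By construction $\Omega(\xi_1,\xi_2)=I(\alpha(\xi_1),\alpha(\xi_2))$, so that $\Omega$ is (up to a constant) the contraction of $\alpha\wedge\alpha$ by $I$; antisymmetry of $\Omega$ is then immediate from antisymmetry of the intersection form $I$ on the $H^1$ of the once-punctured (since $(C,0)$ is irreducible) Milnor fibre. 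For \emph{nondegeneracy} on $S\ssm D$ I would invoke the defining property of the nondegenerate form $\eta$: the infinitesimal period map $P=\alpha_u\colon T_uS\too H^1(C_u;\CC)$ is an isomorphism, so composing with the nondegenerate $I$ keeps $\Omega_u$ nondegenerate. For \emph{closedness} the inputs are that $I$ is flat, $\nabla I=0$ (it is a topological, hence covariantly constant, pairing), and that $\nabla$ is flat. Flatness of $I$ lets one differentiate $\Omega$ through the covariant exterior derivative, expressing $d\Omega$ as a sum of terms of the shape $I(d^\nabla\alpha\wedge\alpha)$; but $d^\nabla\alpha=d^\nabla d^\nabla s$ is the curvature of $\nabla$ applied to $s$, which vanishes. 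Hence $d\Omega=0$ and $\Omega$ is symplectic on $S\ssm D$.

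Part (a) then requires extending $\Omega$ across $D$ and checking it remains symplectic there, and this is where I expect the real work. The period integrals $\int_\gamma\eta|_{C_u}$ are multivalued near $D$, but $\Omega$ is assembled from the monodromy-invariant pairing $I$, so it is a priori single-valued and holomorphic on $S\ssm D$; the task is to bound its coefficients near $D$, apply Riemann extension, and then verify nondegeneracy on $D$ itself. The controlled route is through the Brieskorn lattice and K. Saito's theory of primitive forms, together with Varchenko's asymptotics of vanishing-cycle integrals: these give that the period map extends to all of $S$ and that the pairing induced by $I$ corresponds, under the period isomorphism, to a nondegenerate (higher-residue type) pairing on the Brieskorn lattice. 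This analytic control---extension with preservation of nondegeneracy across the singular locus---is the main obstacle.

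For part (b), that $D(\delta)$ is Lagrangian, the dimension is right: since $(C,0)$ is irreducible, $r=1$, so $\mu=2\delta$, and $D(\delta)$ has codimension $\delta$, whence $\dim D(\delta)=\delta=\tfrac12\dim S$. It therefore suffices to show $\Omega$ is \emph{isotropic} on $D(\delta)$, and by density of $D(\delta A_1)=D(\delta)_{\text{\scriptsize reg}}$ it is enough to do so at a point $u$ where $C_u$ has exactly $\delta$ nodes. There $D(\delta)$ is a normal crossing of $\delta$ smooth branches $D_1,\ldots,D_\delta$ of $D$, with disjoint vanishing cycles $\g_1,\ldots,\g_\delta\in H^1(C_{u'};\CC)$ of a nearby smooth fibre. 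I would use Picard--Lefschetz theory: the period $v_i=\langle s,\g_i\rangle$ over the $i$-th vanishing cycle is a local defining equation of $D_i$ (again by nondegeneracy of $\eta$), so $dv_i(\xi)=I(P\xi,\g_i)$ under $I$-duality and $T_uD_i=\ker dv_i=P^{-1}(\g_i^{\perp})$. Hence $T_uD(\delta)=\bigcap_iP^{-1}(\g_i^{\perp})=P^{-1}(L^{\perp})$ with $L=\langle\g_1,\ldots,\g_\delta\rangle$. The disjoint vanishing cycles satisfy $I(\g_i,\g_j)=0$ for $i\neq j$, and $I(\g_i,\g_i)=0$ automatically by antisymmetry, so $L$ is isotropic of dimension $\delta=\tfrac12\mu$, i.e.\ Lagrangian, giving $L^{\perp}=L$. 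Thus $P(T_uD(\delta))=L$ is Lagrangian and $\Omega|_{T_uD(\delta)}=0$; the vanishing spreads to all of $D(\delta)$ by continuity of $\Omega$, completing the proof.
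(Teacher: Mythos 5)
First, a point of orientation: the paper does not prove this theorem at all --- it is quoted from Givental' and Varchenko \cite{gv82}; only the closedness of $\Omega$ is reproved later, by the same flatness computation you give. So your proposal can only be measured against the statement itself and against the machinery the paper builds for its own results. On $S\setminus D$ your treatment of antisymmetry, nondegeneracy and closedness is correct. Your part (b) is also essentially right, and it takes a genuinely different route from the one available in the paper: you argue at a $\delta$-nodal fibre via Picard--Lefschetz, identifying $T_uD(\delta)$ with the preimage under the period map of the annihilator of the span $L$ of the $\delta$ disjoint vanishing cycles, and using that $L$ is isotropic of dimension $\delta=\tfrac12\mu$, hence Lagrangian. (Two points to make explicit: the linear independence of the $\delta$ vanishing cycles, which follows from $\dim H^1(C_u)=\mu-\delta$ together with the specialization sequence; and the fact that evaluating $\Omega$ and the period map \emph{at} $u\in D$ presupposes the extension across $D$, so your (b) logically depends on the unproved half of (a).) The paper instead extends the intersection form to the Brieskorn module $\s H'$, shows that $DR\circ\nabla P_{\eta,u}$ maps $T_u^{\log D}S$ onto $H^1(C_u)$ (Theorem \ref{periodsurject}), and computes the rank of $\Omega$ on $T_u^{\log D}S$ as the rank of $I_u$ on the singular fibre (Theorem \ref{rank}); the Lagrangian property is then the extreme case $\delta(C_u)=\delta$ of the coisotropy Theorem \ref{coiso}. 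That route works directly with the singular fibre and generalises to all Severi strata; yours is more elementary but specific to the Lagrangian case.

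The genuine gap is in part (a): the extension of $\Omega$ across $D$ as a holomorphic and still nondegenerate $2$-form. You correctly flag this as the real work and name plausible tools (Brieskorn lattice, primitive forms, Varchenko's asymptotics), but you carry out no step of it. It is not a routine boundedness-plus-Riemann-extension argument: a priori $\nabla_\vartheta\eta$ for $\vartheta\in\Theta_S$ lies only in the larger lattice $\s H''$, and one must show that the intersection pairing remains $\OO_S$-valued on the relevant sections and that the resulting $2$-form is nondegenerate along $D$ --- equivalently, that $\vartheta\mapsto\nabla_\vartheta\eta$ identifies $\Theta_S$ with $\s H''$ and $\Theta_S(-\log D)$ with $\s H'$, and that $I$ interacts correctly with this filtration. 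That is precisely the content of the theorem of \cite{gv82}. As it stands, your proposal establishes the easy half of (a) and reduces (b) to it, but does not prove the theorem.
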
 

Below we complement their results and show the following theorems.
\begin{theorem}\label{th1} All of the Severi strata are coisotropic with respect to $\O$. \end{theorem}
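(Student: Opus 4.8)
The plan is to verify coisotropy pointwise on the smooth locus. Recall that a possibly singular subvariety $V\subset(S,\O)$ is coisotropic when $(T_uV)^\perp\subseteq T_uV$ at every smooth point $u$, where ${}^\perp$ denotes the symplectic orthogonal. Since $D(k)_{\text{\scriptsize reg}}=D(kA_1)$, it suffices to work at a point $u$ whose fibre $C_u$ has exactly $k$ nodes $p_1,\ldots,p_k$. By openness of versality the induced deformation is versal at each node separately, so near $u$ the discriminant is a normal crossing $D=D_1\cup\cdots\cup D_k$ of smooth hypersurfaces, where $D_i$ is the locus on which the $i$-th node persists, and $D(k)=D_1\cap\cdots\cap D_k$.

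First I would exhibit good local equations for the $D_i$. Let $\delta_i\in H_1(C_{u'};\ZZ)$ be the vanishing cycle of the $i$-th node, represented in a Milnor ball about $p_i$ in a nearby smooth fibre $C_{u'}$, and set $g_i:=\int_{\delta_i}\eta$. Restricted to the $i$-th node the family is analytically induced from the standard $A_1$ family $x^2+y^2=s$, and a direct computation there gives $\int_{\delta_i}\eta=c_i s_i+(\text{h.o.t.})$ for a coordinate $s_i$ transverse to $D_i$; the non-degeneracy hypothesis on $\eta$ imposed by Givental' and Varchenko is exactly what guarantees $c_i\neq0$. Hence each $g_i$ is a reduced local equation for $D_i$, the differentials $dg_1,\ldots,dg_k$ are independent along $D(k)$, and locally $D(k)=\{g_1=\cdots=g_k=0\}$ is a smooth complete intersection.

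Next I would reduce coisotropy to a Poisson-bracket condition. For a smooth complete intersection one has $(T_uD(k))^\perp=\operatorname{span}\{X_{g_1}(u),\ldots,X_{g_k}(u)\}$, where $X_{g_i}$ is the Hamiltonian field determined by $\iota_{X_{g_i}}\O=dg_i$; hence $D(k)$ is coisotropic if and only if $\{g_i,g_j\}|_{D(k)}=0$ for all $i,j$. The crucial input is that these brackets are intersection numbers of vanishing cycles. Describing the period map as the developing map $P\colon u\mapsto[\eta|_{C_u}]\in H^1(C;\CC)$, each $g_i=\int_{\delta_i}\eta$ is the pullback under $P$ of the linear functional on $H^1$ given by evaluation against $\delta_i$. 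Since $\O=P^*\omega$ for the constant symplectic form $\omega$ on $H^1$ coming from the intersection form, and since the Poisson bracket of two such linear functionals reproduces the intersection pairing of the underlying cycles, one gets
\[\{g_i,g_j\}=\delta_i\cdot\delta_j,\]
a constant. (In particular $X_{g_i}(g_i)=\{g_i,g_i\}=0$, so each $X_{g_i}$ is tangent to its own branch $D_i$; coisotropy of $D(k)$ is precisely the further statement that $X_{g_i}$ is tangent to the other branches as well, i.e.\ that these Hamiltonian fields are logarithmic along all of $D$, which ties the computation to the theme of the paper.) As a check, when $k=\delta$ all these brackets vanish and $D(\delta)$, being a half-dimensional coisotropic complete intersection, is Lagrangian, recovering the theorem of Givental' and Varchenko.

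Finally I would conclude by a topological observation: for $i\neq j$ the vanishing cycles $\delta_i,\delta_j$ can be represented in disjoint Milnor balls about the distinct nodes $p_i,p_j$, whence $\delta_i\cdot\delta_j=0$, while $\delta_i\cdot\delta_i=0$ by antisymmetry. Therefore $\{g_i,g_j\}\equiv0$ for all $i,j$ and $D(k)$ is coisotropic. The main obstacle I anticipate lies in the second and third steps: producing the precise local normal form that makes $g_i$ a reduced equation of $D_i$ — which is where the non-degeneracy of $\eta$ must be used to force the leading coefficient $c_i$ to be nonzero — and carrying out the identification $\O=P^*\omega$ carefully enough that the Poisson bracket of periods yields the intersection form itself, and not, through a normalization slip, its inverse.
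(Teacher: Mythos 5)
Your proof is correct in its essentials, but it takes a genuinely different route from the paper's. The paper derives coisotropy from a pointwise rank computation (Theorem~\ref{rank}): using the surjectivity of the logarithmic period map $T^{\log D}_uS\to H^1(C_u)$ (Theorem~\ref{periodsurject}) it shows that the rank of $\O$ on $T^{\log D}_uS$ equals $2(\delta(C)-\delta(C_u))$ at \emph{every} $u\in S$; at a regular point of $D(k)$ one has $T_uD(k)=T^{\log D}_uS$, so $\ker(\O|_{T_uD(k)})$ has dimension $k=\mbox{codim}\,D(k)=\dim T_uD(k)^\bot$, forcing $T_uD(k)^\bot=\ker(\O|_{T_uD(k)})\subset T_uD(k)$; this same rank statement then powers the Pfaffian equations of Theorem~\ref{th111}. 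You instead make rigorous the ``principle'' that the paper explicitly mentions just after its proof --- that $k$ disjoint vanishing cycles span an isotropic subspace --- by taking the periods $g_i=\int_{\delta_i}\eta$ as local equations of the branches $D_i$ and computing $\{g_i,g_j\}=\pm\,\delta_i\cdot\delta_j=0$. The paper also warns that one cannot argue from an abstract normal form $\{u_1\cdots u_k=0\}$, since the symplectic form chosen there need not be $\O$; your argument sidesteps that objection precisely because you never replace $\O$ but use the actual periods as equations. The step you defer --- that each $g_i$ is a reduced equation of $D_i$ with $dg_1,\ld,dg_k$ independent along $D(k)_{\text{reg}}$ --- is genuinely load-bearing: without it you only obtain $\mbox{span}\{X_{g_1}(u),\ld,X_{g_k}(u)\}\subseteq T_uD(k)^\bot$ rather than equality, which does not suffice. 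Your proposed mechanism for it is the right one and can be completed with material already in the paper: writing the germ at the $i$-th node as induced from the standard $A_1$ family by a submersion $h_i$, one gets $g_i=c_ih_i+O(h_i^2)$ with $c_i$ essentially the value of $d\eta/(dx\wedge dy)$ at the node, and non-degeneracy of $\eta$ makes $\o=d\eta$ a unit along $\D$ (this is the right-hand vertical isomorphism in the commutative diagram of the subsection on the period map); independence of the $dg_i$ then follows from the general position of the branches. With that supplied your proof is complete. Its advantage is that it produces explicit local generators of $I(D(k))$ with constant Poisson brackets and recovers the Givental\cprime--Varchenko Lagrangian statement at $k=\delta$; the paper's rank argument is more uniform, needs no normal-crossing structure, and yields the global equations of the Severi strata as a byproduct.
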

The form $\O$ can also be used to obtain equations defining the Severi-strata. 
Let $\wedge^k \O$ be the $k$-fold wedge product of $\O$. Although it is a regular form, it can be considered as an element of $\Omega^{2k}_S(\log D)$. Let
$I_k$ be the ideal generated by its coefficients with respect to a basis of $\O^{2k}_S(\log D)$. 
\begin{theorem}\label{th2}  For $k=1,\ld, \delta$, the Severi stratum $D(k)$ is defined by the ideal $I_{\delta-k+1}$: $$D(k)=V(I_{\delta-k+1}).$$\end{theorem}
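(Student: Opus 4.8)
The plan is to reformulate the statement intrinsically and reduce it to a single numerical identity about the logarithmic rank of $\Omega$. Since $D$ is the discriminant of an isolated singularity it is a free divisor (Saito), so $\Omega^1_S(\log D)$ is locally free of rank $\mu=2\delta$ and hence so is $\Omega^{2k}_S(\log D)=\wedge^{2k}\Omega^1_S(\log D)$. Writing $\Der(\log D)(u)$ for the fibre of the logarithmic tangent sheaf at $u$, the element $\Omega\in\Omega^2_S(\log D)$ induces an alternating form $\Omega(u)$ on the $\mu$-dimensional space $\Der(\log D)(u)$, and $\wedge^k\Omega$ vanishes in the fibre of $\Omega^{2k}_S(\log D)$ at $u$ exactly when $\Omega(u)^{\wedge k}=0$, i.e. when the logarithmic rank $\mathrm{rk}\,\Omega(u)$ is $<2k$. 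Thus $V(I_k)=\{u:\mathrm{rk}\,\Omega(u)<2k\}$ (the ideal being independent of the chosen basis), and Theorem~\ref{th2} is equivalent to the assertion
\[ \mathrm{rk}\,\Omega(u)=2\bigl(\delta-\delta(C_u)\bigr)=2\,g(C_u)\qquad\text{for all }u\in S,\]
where $g(C_u)$ is the geometric genus of the fibre; indeed then $\mathrm{rk}\,\Omega(u)<2(\delta-k+1)$ holds precisely when $\delta(C_u)\geq k$. I would prove the two inequalities separately.

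For the bound $\mathrm{rk}\,\Omega(u)\le 2(\delta-\delta(C_u))$, which yields $V(I_{\delta-k+1})\subseteq D(k)$, I would work at an arbitrary $u_0\in D$ and exhibit a nondegenerate logarithmic-symplectic subspace of $\Der(\log D)(u_0)$ of the correct dimension. By openness of versality the period map and the Gauss--Manin connection split the vanishing cohomology of a nearby smooth fibre into the part localised at the singular points of $C_{u_0}$, of rank $2\delta(C_{u_0})$, and the complementary part carrying the nondegenerate intersection form of the smooth model of $C_{u_0}$, of rank $2\,g(C_{u_0})$. Under the identification of $\Der(\log D)$ with the infinitesimal deformations tangent to the discriminant, the first part spans logarithmic directions on which $\Omega(u_0)$ is degenerate, while the second yields $2g(C_{u_0})$ directions on which $\Omega(u_0)$ restricts to the nondegenerate intersection form. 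This is the step I expect to be the main obstacle: it requires a careful description of $\Omega$ on $\Der(\log D)$ near a point where $C_{u_0}$ may have several, possibly non-nodal, singularities, via the limit of the period pairing, and it is here that the cohomological meaning of the logarithmic vector fields developed earlier in the paper is essential.

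For the reverse bound $\mathrm{rk}\,\Omega(u)\ge 2(\delta-\delta(C_u))$, equivalently $D(k)\subseteq V(I_{\delta-k+1})$, I would argue by density together with a direct local computation at the smooth points of the strata. At a generic point of $D(k)$ the fibre $C_u$ has exactly $k$ nodes, $D$ is a normal crossing $\{x_1\cdots x_k=0\}$, and the splitting above gives Darboux-type coordinates in which
\[ \Omega=\sum_{i=1}^{k}dx_i\wedge dg_i+\sum_{j=1}^{\delta-k}dp_j\wedge dq_j,\]
with $x_i$ a local equation of the $i$-th component $D_i$ and $g_i$ the single-valued part of the conjugate period. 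Since $dx_i\wedge dg_i=x_i\,(\tfrac{dx_i}{x_i}\wedge dg_i)$, the coefficient of the logarithmic basis form $\frac{dx_i}{x_i}\wedge dg_i$ is $x_i$; expanding $\wedge^{\delta-k+1}\Omega$ and collecting the term that uses all $\delta-k$ regular pairs together with a single factor $\frac{dx_i}{x_i}\wedge dg_i$ shows that $x_i\in I_{\delta-k+1}$ for each $i$, so locally $I_{\delta-k+1}=(x_1,\dots,x_k)$, the reduced ideal of $D(k)$. Hence the dense nodal locus $D(kA_1)$ lies in the closed set $V(I_{\delta-k+1})$, and therefore so does its closure $D(k)$.

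Combining the two inclusions gives the set-theoretic equality $D(k)=V(I_{\delta-k+1})$. The local computation in fact shows that along $D(kA_1)$ the ideal $I_{\delta-k+1}$ is the full radical ideal of $D(k)$, which is the entry point both for a scheme-theoretic refinement and for the Cohen--Macaulay statements; note also that the coisotropy of Theorem~\ref{th1} is the assertion that the \emph{ordinary} restriction $\Omega|_{T_uD(k)}$ has corank $k$ at such points, the non-logarithmic shadow of the computation above. The shift by $1$ in the index ($I_{\delta-k+1}$ rather than $I_{\delta-k}$) reflects that $\wedge^{\delta-k+1}\Omega$ is the first wedge power that vanishes logarithmically once the geometric genus drops to $\delta-k$.
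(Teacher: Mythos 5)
Your reduction of the theorem to the pointwise identity $\mathrm{rk}\,\Omega(u)=2(\delta-\delta(C_u))$ on the logarithmic tangent space is exactly the paper's route (its Theorem \ref{rank}), and the observation that $V(I_k)$ is the locus where the logarithmic rank drops below $2k$ is correct. But the substantive half of that identity --- the one you yourself flag as ``the main obstacle'' --- is not proved, and it is precisely the half on which the theorem rests: the bound $\mathrm{rk}\,\Omega(u)\ge 2(\delta-\delta(C_u))$ at an \emph{arbitrary} point $u$, which gives $V(I_{\delta-k+1})\subseteq D(k)$ and which cannot be obtained by density or semicontinuity (the rank of a matrix of functions can only drop at special points, so generic information never bounds it from below there). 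The paper's mechanism is concrete: the logarithmic period map $\nabla P_{\eta,u}:T_u^{\log D}S\to H^1(C_u)$ is \emph{surjective} for every $u\in S$ (Theorem \ref{periodsurject}, deduced from Looijenga's relative de Rham exact sequence and the surjectivity of the de Rham evaluation map on the singular fibre); combined with $\Omega(\vt_1,\vt_2)(u)=I_u([\nabla_{\vt_1}\eta|_{C_u}],[\nabla_{\vt_2}\eta|_{C_u}])$ and the fact that $I_u$ is pulled back from the nondegenerate intersection form on $H^1(\wt C_u)$, of dimension $2(\delta-\delta(C_u))$, this gives the rank identity at every point in one stroke, with no case division. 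A surjection is all that is needed; the finer ``splitting'' you invoke, separating a part localised at the singular points from a complementary nondegenerate part, amounts to Varchenko's isomorphism statement on $T_u\s R(u)$, which is strictly harder and is only cited, not proved, in the paper.

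Two further points. First, your inequality labels are swapped: the upper bound $\mathrm{rk}\,\Omega(u)\le 2(\delta-\delta(C_u))$ is what yields $D(k)\subseteq V(I_{\delta-k+1})$, and the lower bound yields the reverse inclusion; the arguments you attach to each inclusion are aimed at the right target, but each is stated as proving the wrong bound. Second, the Darboux-type expression $\Omega=\sum_i dx_i\wedge dg_i+\sum_j dp_j\wedge dq_j$, with the $x_i$ equations of the local branches of $D$, is a simultaneous normal form for the pair $(\Omega,D)$ that the paper explicitly warns against assuming (end of its Section 5.2): $\Omega$ is determined globally by the period map, and an isomorphism carrying $(D,u_0)$ to the normal crossing model need not carry $\Omega$ to such a form. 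It is also unnecessary --- once the rank identity is known at points of $D(kA_1)$, the inclusion $D(kA_1)\subseteq V(I_{\delta-k+1})$ follows, and closedness of $V(I_{\delta-k+1})$ finishes that direction.
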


Equivalently, if $\chi_1,\ld,\chi_\mu$ form a basis for the free module of logarithmic vector fields $\Theta_S(-\log D)$, then
$D(k)$ is defined by the ideal generated by the Pfaffians of size $2\delta-2k+2$ of the skew matrix $\left(\O(\chi_i,\chi_j)\right)_{1\leq i,j,\leq \mu}$. 

We do not know whether in general the ideals $I_k$ are radical. Our calculations suggest that they are, but we have not been able to show this. 

We note that in \cite[II. Proposition 2.57]{gls} it was shown that the strata are analytic, but no equations 
were given.

Givental' proved in \cite{Giv88} that for curve singularities of type $A_{2k+1}$, $D(\delta)$ is Cohen-Macaulay and it can be conjectured that this is always the case,\cite{vS}. In the first author's PhD thesis, \cite{Cad2011},  Theorem \ref{th2} was used to show that $D(\delta)$ is Cohen Macaulay also for $E_6$ and $E_8$. Calculations using Theorem \ref{th2} suggest that the remaining Severi strata are Cohen-Macaulay in the case of $A_{2k}$, but show that for $E_6$ the stratum $D(2)$ is not Cohen-Macaulay. 

In the process of proving these theorems we noticed that $\O$ determines a natural rank $2$ maximal Cohen-Macaulay module over the discriminant $D$, which seems to be of independent interest.

\section{Preliminaries}
Let $f:(\CC^{n+1},0)\to (\CC,0)$ define an isolated singularity $(C,0)$ and let 
\[g_1,g_2,\ldots,g_{\mu}=1 \in \OO_{\CC^{n+1},0}\]
be functions giving a basis for the jacobian algebra $\OO/J_f$. We 
consider a good representative of a miniversal deformation of $f$ of the
form
\[F:B \times S\to\CC,\;\;\; F(x,u)=f(x)+\sum_{i=1}^\mu u_ig_i(x)\;,\]
where $B$ is a Milnor ball for $C$ and $S$ is a sufficiently small ball in 
$\CC^\mu$,\cite{looijenga}.
The set $X:=F^{-1}(0)$ comes with a map $\pi:X \too S$, with $C_u$ as fibre 
over $u \in S$. 

\subsection{The critical space $\Sigma$}
The relative critical set $\Sigma$ of $F$ is defined to be 
$$\Sigma=\left\{(x,u)\in B \times S:\frac{\p F}{\p x_i}(x,u)=0, i=0,\ld,n\right\}.$$
It is smooth and the projection $\pi:\Sigma\to S$ is a $\mu$-fold branched cover: its fibre over $u \in S$ consists of the critical points of $F(-,u)$. As the partial derivatives form a regular sequence,
\[\OO_\Sigma=\OO_{B\times S}/(\p F/\p x_0,\ld,\p F/\p x_n)\] is a free $\OO_S$-module of rank $\mu$. Miniversality of $F$ is equivalent to the statement that the Kodaira-Spencer map 
$$dF:\Theta_S\to \OO_{\Sigma},\quad \vartheta\mapsto \vartheta(F)=dF(\vartheta)$$ is an isomorphism.
The set
$X\cap\Sigma$ is the union over $u\in S$ of the set of singular points of $C_u$, and its image under $\pi$ is the discriminant, $D$.  For brevity we denote $X \cap \Sigma$ by $\D$. It is indeed the normalisation of $D$.
\subsection{$D$ as a free divisor}
Let $\bar F:(B\times S, (0,0))\to(\CC\times S,(0,0))$ be the unfolding of $f$ corresponding to the deformation $F$. Then $\Sigma\subset B\times \CC^\mu$ is the (absolute) critical locus of $\bar F$. We write $\Delta=\bar F(\Sigma)\subset\CC\times S$ for the set of critical values of $\bar F$. It is well known that $\Sigma$ is the normalisation of $\Delta$: it is smooth, and the map $\bar F|:\Sigma\to\Delta$ is generically one-to-one. Then $D=\Delta\cap\{0\}\times S$. As usual, $\Theta_{\CC\times S}(-\log \Delta)$ denotes the $\OO_{\CC\times S}$-module of vector fields on $\CC\times S$ which are tangent to $\Delta$, and $\Theta_S(-\log D)$ denotes the $\OO_S$-module of vector fields on $S$ which are tangent 
to $D$. 
\begin{proposition} 
(i) $\Theta_{\CC\times S}(-\log \Delta)$ is the $\OO_{\CC\times S}$-module of vector fields on $\CC\times S$ which are $\bar F$-liftable to $B\times S$. 
\newline
(ii) $\Theta_S(-\log D)$ is the $\OO_S$-module of vector fields on $S$ which are $\pi$-liftable to $V(F)$.
\end{proposition}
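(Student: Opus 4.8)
The plan is to reduce both assertions to a concrete membership statement in $\OO_\Sigma$, to settle one inclusion by integrating lifts, and to obtain the reverse inclusion from Saito's criterion once the liftable vector fields are shown to form a free module of the correct rank. First I would unwind the liftability conditions using $\partial F/\partial u_i=g_i$ and the fact that $\partial F/\partial x_0,\ld,\partial F/\partial x_n$ generate $J_F$, the ideal defining $\Sigma$. For (ii), a field $\eta=\sum_i b_i\partial_{u_i}$ on $S$ lifts to a field tangent to $V(F)$ iff $\sum_i b_i g_i\in (F)+J_F$; reducing modulo $J_F$ this says exactly that $dF(\eta)=\sum_i b_i[g_i]$ lies in the principal ideal $\phi\,\OO_\Sigma$, where $\phi:=F|_\Sigma$. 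Thus the module $L$ of $\pi$-liftable fields is $dF^{-1}(\phi\,\OO_\Sigma)$. For (i), writing $\xi=a\partial_t+\sum_i b_i\partial_{u_i}$ and using $\bar F(x,u)=(F(x,u),u)$, the lift is forced on its $u$-components and the remaining equation becomes $\bar F^*(a)-\sum_i\bar F^*(b_i)[g_i]=0$ in $\OO_\Sigma$; hence the module $L'$ of $\bar F$-liftable fields is the kernel of the $\OO_{\CC\times S}$-linear map $\Lambda:\Theta_{\CC\times S}\to\OO_\Sigma$ sending $\xi$ to $\bar F^*(a)-\sum_i\bar F^*(b_i)[g_i]$.

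Next I would dispose of the inclusions $L\subseteq\Theta_S(-\log D)$ and $L'\subseteq\Theta_{\CC\times S}(-\log\Delta)$ by a flow argument. A lift of $\eta$ integrates to a flow on $V(F)$ which covers the flow of $\eta$ and carries critical points of $\pi|_X$ to critical points, hence critical values to critical values; since $D$ is exactly the set of critical values of $\pi|_X$, the flow on $S$ preserves $D$ and $\eta$ is logarithmic. Likewise a lift of $\xi$ gives a flow on $B\times S$ that is $\bar F$-related to the flow of $\xi$; diffeomorphisms preserve the critical set $\Sigma$ of $\bar F$, so the flow of $\xi$ preserves $\bar F(\Sigma)=\Delta$.

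The substance is the reverse inclusion, which I would obtain by freeness plus Saito's criterion rather than by an explicit construction of lifts. Since $dF$ is an isomorphism and $\phi$ is a non-zerodivisor in $\OO_\Sigma$, we get $L\cong\phi\,\OO_\Sigma\cong\OO_\Sigma$, which is $\OO_S$-free of rank $\mu$; likewise, because $\OO_\Sigma$ is a Cohen--Macaulay $\OO_{\CC\times S}$-module of codimension $1$ it has projective dimension $1$, so the surjection $\Lambda$ exhibits $L'=\ker\Lambda$ as free of rank $\mu+1$. Choosing bases, the determinant of the coefficient matrix of a basis of $L$ (resp.\ $L'$) generates the zeroth Fitting ideal of $\OO_\Sigma/\phi\,\OO_\Sigma=\OO_{\D}$ (resp.\ of $\OO_\Sigma$) as a module over $\OO_S$ (resp.\ $\OO_{\CC\times S}$); for $L$ this determinant is just the determinant of multiplication by $\phi$ on the free module $\OO_\Sigma$. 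Because $\bar F|:\Sigma\to\Delta$ and its restriction $\D\to D$ are the normalisation maps, and so are generically one-to-one, these Fitting ideals are generated by reduced equations of $\Delta$ and $D$. Saito's criterion then upgrades the inclusions of the previous paragraph to the equalities $L=\Theta_S(-\log D)$ and $L'=\Theta_{\CC\times S}(-\log\Delta)$, and en route re-proves that $D$ and $\Delta$ are free divisors.

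The main obstacle is precisely this last point: matching the determinant of a basis of liftable fields with a \emph{reduced} defining equation of the discriminant. Everything hinges on the generic length of the normalisation $\bar F|:\Sigma\to\Delta$ along $\Delta$ being one, equivalently on the norm equation being reduced; granting this, Saito's criterion closes both parts simultaneously. If one prefers to avoid reducedness, the same conclusion follows by noting that $L$ and $L'$ are reflexive of full rank and agree with the logarithmic modules at the generic points of $D$ and $\Delta$, where the fibre acquires a single node and the equivalence is a direct local check; reflexive modules agreeing in codimension one coincide.
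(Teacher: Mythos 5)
Your argument is correct, but it reaches the conclusion by a genuinely different route from the paper. For the hard inclusion (logarithmic $\Rightarrow$ liftable) the paper uses Seidenberg's theorem: a vector field tangent to $\Delta$ lifts to a derivation on its normalisation $\Sigma$, which is then extended to $B\times S$ and corrected by an element of the Jacobian ideal (using that this ideal is radical); part (ii) is then deduced from part (i) by logarithmic transversality of the slice $\{0\}\times S$ to $\Delta$. You instead identify the liftable fields concretely as $dF^{-1}(F\OO_\Sigma)$ (resp.\ $\ker\Lambda$), observe they form a free module of full rank, and close the gap with Saito's criterion after identifying the determinant of a basis with the zeroth Fitting ideal of $\OO_{\D}$ (resp.\ $\OO_\Sigma$), hence with a reduced equation of $D$ (resp.\ $\Delta$). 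This is sound: the Fitting ideal is principal because $\OO_{\D}$ and $\OO_\Sigma$ are Cohen--Macaulay of codimension one over the respective ambient rings, and it is reduced because $\D\to D$ and $\Sigma\to\Delta$ are generically one-to-one (the Mond--Pellikaan description of $F_0$ of a direct image, or equivalently the classical fact that the norm of $F$ along $\Sigma\to S$ is the reduced discriminant); your reflexivity fallback also works. The trade-off: the paper's route needs Seidenberg plus radicality of $(\partial F/\partial x_0,\ldots,\partial F/\partial x_n)$ and gets freeness of $D$ as a \emph{consequence}, whereas yours imports Saito's criterion and the reducedness of the Fitting ideal as external inputs but treats (i) and (ii) symmetrically and re-proves freeness en route. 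You correctly isolated the genuine pressure point (reducedness of the norm equation), so there is no gap, only a heavier reliance on standard machinery.
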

\begin{proof}{(\cite{looijenga})} (i) Let $\vt\in\Theta_{\CC\times S}(-\log \DDD)$. Since $F|:\Sigma\to \DDD$ is the normalisation of $\DDD$, there is a vector field $\tilde\vt_0$ on $\Sigma$ lifting $\vt$. For any extension
$\tilde\vt_1$ of $\tilde\vt_0$ to $B\times S$, $\o F(\vt)-tF(\tilde\vt_1)$ vanishes on $\Sigma$, and since the jacobian ideal $(\p F/\p x_0,\ld, \p F/\p x_{n})$ is radical, there exists a second vector field  $\xi$ on $B\times S$ such that $\o F(\tilde\vt_1)-tF(\tilde\vt_1)=tF(\xi).$ Then $\tilde\vt_1+\xi$ is an $\bar F$-lift of $\vt$. 

Conversely, suppose $\tilde\vt$ is a $\bar F$-lift of $\vt$. Then $\tilde\vt$ must be tangent to $\Sigma$, for the integral flows $\tilde\Phi_t$ and $\Phi_t$ of $\tilde\vt$ and $\vt$ satisfy $\Phi_1\circ \bar F=\bar F\circ\Phi_t$, showing that $\tilde\Phi_t$ defines an isomorphism $\bar F^{-1}(u)\to\bar F^{-1}(\Phi_t(u))$, and must therefore map singular points of $\bar F^{-1}(u)$ to singular points of $\bar F^{-1}(\Phi_t(u))$. 
It follows that $\vt$ is tangent to $\DDD$.

(ii) Let $i_0:S\to\CC\times S$ be the inclusion $u\mapsto (0,u)$. Then $D=i_0^{-1}(\DDD)$. Now $i_0$ is logarithmically transverse to $\DDD$, i.e. transverse to the distribution $\Theta_{\CC\times S}(-\log \DDD)$. If $F$ is the standard deformation $f(x)+\sum_iu_ig_i$,
with $g_\mu=1$, then this transversality is obvious: the vector field $\p/\p t+\p/\p u_\mu$ on $\CC\times S$ has $\bar F$-lift $\p/\p u_\mu$, and therefore lies in $\Theta_{\CC\times S}(-\log \DDD)$. Any other miniversal deformation is parametrised $\s R$-equivalent to the standard deformation, so the transversality holds there too. 

Identifying $\CC^\mu$ with $\{0\}\times\CC^\mu$, from the logarithmic transversality of $i_0$ to $\DDD$ it follows that $\Theta_S(-\log D)$ is equal to 
$\theta_{\CC\times S}(-\log \DDD)\bigcap\theta_{\CC\times S}(-\log (\{0\}\times S))$ restricted  to $\CC^\mu$, and that every vector field in $\Theta_S(-\log D)$ extends to a vector field in $\Theta_{\CC\times S}(-\log \DDD)$. Any lift to $\CC^{n+1}\times S$ of a vector field in $\theta_{\CC\times S}(-\log \DDD)\bigcap\theta_{\CC\times S}(-\log (\{0\}\times S))$ must be tangent to $V(F)$, and any vector field whose $\bar F$-lift is tangent to $V(F)$ must lie in $\theta_{\CC\times S}(-\log \DDD)\bigcap\theta_{\CC\times S}(-\log (\{0\}\times S))$.
 
\end{proof}
Therefore we
have a diagram 
\beq\label{logdi}\xymatrix{0&\OO_\D\ar[d]^=\ar[l]&\Theta_S\ar[d]^{dF}\ar[l]&\;\Theta_S(-\log D)\ar[d]^{\frac{dF}{F}}\ar@{_{(}->}[l]\\
0&\OO_\D\ar[l]&\OO_\Sigma\ar[l]&\OO_\Sigma\ar[l]_F&0\ar[l]}
\eeq
where the vertical maps are isomorphisms. This diagram can be used to find a basis for $\Theta_S(-\log D)$. The germs $FdF(\p/\p u_i)$ generate $(F)\OO_\Sigma$,  therefore 
if 
\beq\label{tba}
dF(\chi_i)=FdF\left(\frac{\p}{\p u_i}\right)
\eeq
then the $\chi_i$ generate $\TTT$. This shows that $\TTT$ is a locally free module, so that $D$ is a
free divisor. 

\subsection{Stratification of $D$}
The discriminant $D$ is stratified in various ways. Of special relevance to us 
are the local $\s R$ and $\s K$ strata.

Suppose as before that $F:B\times S\to\CC$ is a good representative of a versal deformation of $f$, where $B$ is open in $\CC^{n+1}$ and $S$ is open in $\CC^\mu$. Write $f_u=F(\,\_\,,u)$, and suppose that $p_1,\ld,p_k$ are the critical points of $f_u$ lying on $f_u^{-1}(0)$. For each point $p_i$,  the germ 
\[F:(B\times S,(p_i,u))\to(\CC,0)\] 
is an $\s R_e$-versal deformation of the germ
of $f_u$ at $p_i$, by openness of versality. Hence there is a germ of submersion $h_i$ from $(S,u)$ to the base of an $\s R_e$-miniversal deformation \[G_i:(B\times \CC^{\mu_i},(p_i,0))\to(\CC,0)\] 
of this germ, such that the germ of deformation $F:(B\times S, (p_i,u))\to(\CC,0)$ is isomorphic to $h_i^*( G_i)$. We set \[\s R_i(u)=h_i^{-1}(0).\] 
This is independent of the choice of miniversal deformation $G_i$ and submersion $h_i$, since any two choices can be related by a commutative diagram of spaces and maps. Again by openness of versality, the $\s R_i(u)$, $i=1,\ld, k$ are in general position with respect to one another, and we set 
\[\s R(u)=\bigcap_{i=1}^k\s R_i(u).\]  This is the {\it $\s R$ stratum through $u$}.  It is smooth of dimension $\mu-\sum_i\mu(f_u,p_i)$.    

If in the above definition we replace $F: B\times S\to \CC$ by the projection $V(F)\to S$, and
replace each $G_i$ by a $\s K_e$-miniversal deformation $H_i$ of the hypersurface singularity $(C_u,p_i)$, then we obtain the $\s K$-strata $\s K_i(u)$ and their intersection $\s K(u)$, the {\it $\s K$-stratum through $u$}, which is once again smooth, by 
openness of versality, and has dimension $\mu-\sum_i\tau(C_u,p_i)$. Since $\s R\subset\s K$, 
$\s R(u)\subset\s K(u)$.

If, for example, the fibre $C_u$ has $k$ $A_1$ singularities and no other singular points, then $\s R(u) =\s K(u)$ and its germ at $u$ coincides with the germ at $u$ of the set of points $u'$ such that $C_{u'}$ has $k$ $A_1$ points and no other singularities. 
\begin{definition}
The logarithmic tangent space $T^{\log D}_u S$ is the vector subspace of $T_uS$ spanned at $u$ 
by the values at $u$ of the germs of vector fields in $\Der(-\log D)_u$.
\end{definition}

\begin{proposition} One has the equality of vector spaces
\[T^{\log D}_uS=T_u\s K(u).\]
\end{proposition}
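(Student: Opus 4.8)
The plan is to transport both sides of the asserted equality into the Milnor algebra of $f_u$ via the Kodaira--Spencer isomorphism, and to verify that they coincide there. Write $\mm_u\subset\OO_S$ for the maximal ideal at $u$, and let $\OO_{\Sigma_u}:=\OO_\Sigma\otimes_{\OO_S}(\OO_S/\mm_u)$ be the fibre over $u$ of the free $\OO_S$-module $\OO_\Sigma$; concretely $\OO_{\Sigma_u}=\bigoplus_q\OO_{\CC^{n+1},q}/J_{f_u}$, the sum being over the critical points $q$ of $f_u$. Tensoring the Kodaira--Spencer isomorphism $dF:\Theta_S\to\OO_\Sigma$ with $\OO_S/\mm_u$ yields an isomorphism $T_uS\xrightarrow{\sim}\OO_{\Sigma_u}$ sending the class of $v=\sum_jv_j\,\partial/\partial u_j$ to $\sum_jv_j\,g_j|_{\Sigma_u}$. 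I will show that under this isomorphism both $T^{\log D}_uS$ and $T_u\s K(u)$ are carried to the submodule $f_u\cdot\OO_{\Sigma_u}$, whence the equality.

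For the logarithmic side I would use the free-divisor diagram \eqref{logdi}: the right-hand vertical isomorphism $\frac{dF}{F}$ identifies $\TTT$ with $\OO_\Sigma$, so that $dF(\TTT)=F\cdot\OO_\Sigma$ (indeed the generators of \eqref{tba} satisfy $dF(\chi_i)=Fg_i$, and the $g_i$ generate $\OO_\Sigma$). Since $dF$ is $\OO_S$-linear and compatible with reduction modulo $\mm_u$, the subspace $T^{\log D}_uS$, being the span of the values at $u$ of the fields in $\TTT$, is carried onto the image of $F\cdot\OO_\Sigma$ under the surjection $\OO_\Sigma\twoheadrightarrow\OO_{\Sigma_u}$. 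That image is $\overline F\cdot\OO_{\Sigma_u}$, and because $F(x,u)=f_u(x)$ on $\Sigma_u$ it equals $f_u\cdot\OO_{\Sigma_u}$.

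For the contact side I would differentiate the classifying submersions $h_i$. By construction $\s K_i(u)=h_i^{-1}(0)$, so $T_u\s K(u)=\bigcap_i\ker(dh_i|_u)$. As $H_i$ is a $\s K_e$-miniversal deformation of $(C_u,p_i)$, its Kodaira--Spencer map identifies the tangent space of the base with the $\s K_e$-normal space $\OO_{\CC^{n+1},p_i}/(J_{f_u}+(f_u))$, the Tjurina algebra; versality then makes $dh_i|_u$ the Kodaira--Spencer map of $F$ at $p_i$ followed by projection onto this quotient, so $dh_i|_u(v)=[\sum_jv_jg_j]$ in the Tjurina algebra at $p_i$. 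Hence, under $T_uS\cong\OO_{\Sigma_u}$, $T_u\s K(u)$ is the kernel of the natural surjection $\OO_{\Sigma_u}\to\bigoplus_i\OO_{\CC^{n+1},p_i}/(J_{f_u}+(f_u))$. At a critical point $q$ off $C_u$ the element $f_u$ is a unit and imposes no condition, while at each singular point $p_i$ of $C_u$ the kernel of $\OO_{\Sigma_u,p_i}\to\OO_{p_i}/(J_{f_u}+(f_u))$ is exactly $f_u\cdot\OO_{\Sigma_u,p_i}$; so this kernel is again $f_u\cdot\OO_{\Sigma_u}$, completing the identification. As a consistency check, both subspaces then have dimension $\sum_q\mu(f_u,q)-\sum_i\tau(C_u,p_i)=\mu-\sum_i\tau(C_u,p_i)=\dim\s K(u)$.

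The step I expect to be the main obstacle is the precise computation of $dh_i|_u$ in the third paragraph: one must invoke $\s K_e$-versality to see that the differential of the classifying map is the Kodaira--Spencer map of $F$ at $p_i$ composed with projection to the Tjurina algebra, and one must treat the several singular points $p_i$ simultaneously while keeping track of the splitting of $\OO_{\Sigma_u}$ over \emph{all} critical points of $f_u$, including those off $C_u$ where the contact condition is vacuous. Everything else is linear algebra once the two sides have been pushed into $\OO_{\Sigma_u}$.
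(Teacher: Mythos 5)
Your argument is correct, but it reaches the conclusion by a genuinely different route from the paper. The paper also starts from the exact sequence $0\to\Theta_S(-\log D)\to\Theta_S\to\pi_*(\OO_{\D})\to 0$, but uses it only to see that $T_uS/T^{\log D}_uS\simeq\bigoplus_iT^1_{\s K_e}(f_u,p_i)$, i.e.\ that the two subspaces in the statement have the \emph{same codimension}; it then proves the single inclusion $T^{\log D}_uS\subseteq T_u\s K(u)$ geometrically: a logarithmic field lifts to a field tangent to $V(F)$, the lifted flow carries $C_u$ to $C_{\pp_t(u)}$, so the integral curve through $u$ lies in each $\s K_i(u)$. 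You instead compute both subspaces exactly, transporting them into $\OO_{\Sigma_u}$ and identifying each with $f_u\cdot\OO_{\Sigma_u}$ --- the logarithmic side from $dF(\Theta_S(-\log D))=F\cdot\OO_\Sigma$ (diagram \eqref{logdi}), the contact side by identifying $dh_i|_u$ with the $\s K_e$-Kodaira--Spencer map of $F$ into the Tjurina algebra at $p_i$. That identification is precisely the step the paper's flow argument is designed to avoid, and it is the one place you must be explicit: you need that the Kodaira--Spencer map of an induced deformation is the differential of the inducing map composed with the (iso\-morphic, by miniversality) Kodaira--Spencer map of $H_i$, and that isomorphic deformations have the same $\s K_e$-Kodaira--Spencer map. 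Granting that standard fact, your proof is complete, and it even buys something extra: since $\bigcap_i\ker dh_i|_u=f_u\OO_{\Sigma_u}$ has dimension $\mu-\sum_i\tau(C_u,p_i)$, the transversality of the $\s K_i(u)$ falls out rather than being quoted from openness of versality. The paper's version, in exchange, requires no computation of $dh_i|_u$ at all.
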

\begin{proof}
We have the exact sequence
$$0\to \Theta_S(-\log D) \to \Theta_S\to\pi_*(\OO_{\D})\to 0 $$
which gives
$$\frac{\Theta_S}{\Theta_S(-\log D)}\simeq \pi_*(\OO_{\D})$$
and so 
$$\frac{T_u\CC^\mu}{T^{\log D}_uS}\simeq\frac{\Theta_S}{\Theta_S(-\log D)+\mm_{S,u}\Theta_{S,u}}\simeq \bigoplus _iT^1_{\s K_e}(f_u,x_i)$$
This means that to show
$$T^{\log D}_uS=T_u\s K(u)$$ we need show only one inclusion. If $\vt\in\Theta_S(-\log D)_u$ then it has a  lift
$\wt\vt$ tangent to $V(F)$. The integral flows  of $\vt$ and $\tilde \vt$, $\pp_t$ on $(S,u)$ and $\wt\pp_t$ on $V(F)$,  satisfy $\pi\circ\tilde\pp_t=\pp_t\circ \pi.$ It follows that $\tilde\pp_t$ maps $C_u$ to $C_{\pp_t(u)}$, and therefore for each singular point $p_i$ in $C_u$, the curve germ $\{\pp_t(u):t\in(\CC,0)\}$ lies in the set
$\s K_i(u)$ defined above. Hence $\{\pp_t(u):t\in(\CC,0)\}\subset \bigcap_i\s K_i(u)=\s K(u)$, and 
$\vt(0)\in T_u\s K(u).$
\end{proof}
\def\R{\text{Res}}
\subsection{Isomorphism $\OO_{\Sigma} \to \Omega_F$}\label{symiso}
A choice of a nowhere-vanishing relative $(n+1)$-form $\o \in \Omega_{B \times S/S}^{n+1}$ 
determines an isomorphism 
\[\OO_\Sigma \simeq \O^{n+1}_F,\;\;g \mapsto g \o \] 
where 
$$\O^{n+1}_F:=\O^{n+1}_{B\times S/S}/dF\wedge\O^n_{B\times S/S}.$$

Such an isomorphism leads to many additional structures.
First of all, there is a canonical perfect pairing, the {\it residue pairing}, 
$$\R:\O^{n+1}_F\times\O^{n+1}_F\to \OO_S,$$
from which one obtains a perfect pairing on $\OO_{\Sigma}$.
$$\langle\,\_\,,\_\,\rangle:\OO_\Sigma\times\OO_\SS\to\OO_S.$$ 
Furthermore, because $\O_S^1$ and $\O^1_S(\log D)$ are $\OO_S$-dual to 
$\TT$ and $\TTT$, such a choice of $\o$ also determines isomorphisms 
$$\a:\O^1_S\to\OO_\Sigma\quad\text{ and }\quad
\beta:\Omega^1_S(\log D)\to\OO_\Sigma$$
via the formulas
$$\langle dF(\vt), \a(\xi)\rangle=\xi(\vt),\quad \text{and}\quad \langle \frac{dF}{F}(\vt),\beta(\xi)\rangle
=\xi(\vt).$$
As a result 
$\Theta_S, \Theta_S(-\log D), \O^1_S$ and $\O^1_S(\log D)$ are all identified with 
$\OO_\Sigma$ and hence with one another. For example we have the isomorphism 
$$k^{-1}\circ\beta:\O^1_S(\log D)\to \TT,$$ where $k:\TT\to\OO_\SS$ is the Kodaira-Spencer map $dF$. 

Note that for any $a,b,c\in \OO_\Sigma$, the pairing satisfies 
$$\langle a,bc\rangle =\langle ab,c\rangle,$$
and so multiplication by $F$ on $\OO_\SS$ is self-adjoint:
$$\langle g, Fh\rangle=\langle Fg,h\rangle.$$
As a result, if $\check g_i$, $i=1,\ld,\mu$ denotes the $\OO_S$ basis of $\OO_\SS$ dual 
to the basis $g_i=\p F/\p u_i$, $i=1,\ld,\mu$, then replacing $FdF(\p/\p u_i)$ in \eqref{tba} by $\check g_i$, we get
generators $\chi_1,\ld,\chi_\mu$ for $\TTT$ whose matrix of coefficients with respect to the
$\p/\p u_j$ is the symmetric matrix with $i,j$ entry $\chi_{ij}=\langle \check g_i,F\check g_j\rangle$.

In our calculations in section 7 we always use such a basis. 
We note that if $\o_1,\ld,\o_\mu$ is the basis for $\O^1(\log D)$ dual to $\chi_1,\ld,\chi_\mu$ then 
\beq\label{useq}k^{-1}\beta(\o_i)=\frac{\p}{\p u_i}, \quad\text{and}\quad k^{-1}\a(du_i)=\chi_i.\eeq
\section{ The Gau{\ss}-Manin connection}
The study of the Gau{\ss}-Manin connection for hypersurface singularities 
was initiated by {\sc Brieskorn} in \cite{brieskorn} and has since then 
developed into a very detailed theory. We can only outline the parts of the theory  that are relevant to our 
application. For a more detailed accounts we refer to \cite{greuel}, \cite{looijenga}, \cite{agv}, \cite{kulikov}, \cite{hertling} 
and the original papers quoted there.
 
\subsection{The cohomology bundle and its extensions}
The spaces $H^n(X_u)=H^n(X_u;\CC)$ fit together into the cohomology bundle 
\[H^* =\bigcup_{u \in S \ssm D} H^n(X_u)\] 
over $S\ssm D$. It is a flat vector bundle and the associated sheaf of holomorphic sections
\[\s H^* =H^* \otimes_\CC \OO_{S \ssm D} \]
is equipped with a natural flat connection, the Gauss Manin connection,
\beq\label{gm}\nabla:\s H^*\to \s H^*\otimes_{\OO_S}\O^1_{S\setminus D}\eeq
As usual, we write 
\[ \nabla_{\vartheta}: \s H^* \too \s H^*\]
for the action of a vector field $\vt \in \Theta_{S \ssm D}$. 
The sheaf $\s H^*$ over $S\ssm D$ has  various extensions to $S$. Most relevant to us is the parameterised version of Brieskorn's module $H'$:  
\beq\label{brieskorn} 
\s H':=\pi_*(\O^n_{X/S})/d\pi_*(\O^{n-1}_{X/S}).
\eeq
A section of $\s H'$ over $U \subset S$ is represented by a (relative) holomorphic $n$-form $\eta$ on $\pi^{-1}(U) \subset X$. If $U \subset S \ssm D$ and $u \in U$, the restriction of $\eta$ to the smooth fibre $X_u$ is a closed form $n$-form
and thus determines a cohomology class
\[ [\eta|_{X_u}] \in H^n(X_u)\] 
In this way one obtains an isomorphism $\s H' (U) \to \s H^*(U)$ and thus $\s H'$ can be considered as an {extension} of $\s H^*$, that is, there is a map of $\OO_S$-modules
\[ \s H' \too j_* \s H^*,\;\;\;\]
which is an isomorphism over $S \ssm D$, where $j:S \ssm D \into S$ is the inclusion.
The sheaf $\s H'$ is a locally free sheaf of rank $\mu$, but for a general $\vartheta \in \Theta_S$
the Gau{\ss}-Manin connection maps $\s H'$ into a bigger extension $\s H'' \supset \s H'$. This
second Brieskorn module $\s H''$ can be defined as
\[ \s H'' :=\pi_* \omega_{X/S}/d \pi_*(d\Omega^{n-1}_{X/S}),\]
where $\omega_{X/S}$ denotes the relative dualising module, \cite[page 158]{looijenga}. Elements
from $\omega_{X/S}$ are most conviently described as residues of $n+1$-forms, that is, as
Gelfand-Leray forms. There is an exact sequence
\beq\label{bries} 0 \too \s H' \too \s H'' \too \Omega^{n+1}_{X/S} \too 0 \eeq

When we restrict to logarithmic vector fields, the connection maps $\s H'$ and $\s H''$ to 
themselves, so we have logarithmic connections
$$\nabla:\s H'\too \s H'\otimes_{\OO_S}\O^1_S(\log D)$$
$$\nabla:\s H''\too \s H''\otimes_{\OO_S}\O^1_S(\log D)$$
extending the Gauss-Manin connection \eqref{gm}.
(As there is no possibility of confusion, we denote all these maps by the same symbol $\nabla$)

The action of  $\chi \in \Theta_S(-\log D)$ on a local section $[\eta ]$ represented by a 
relative $n$-form $\eta$ is given by the Lie derivative with respect to a lift 
$\wt \chi$ of $\chi$:
\[ \nabla_{\chi} \eta =[Lie_{\wt \chi}(\eta)]\]
(\cite[p. 148]{looijenga}).

\subsection{$\s H'$ and the cohomology of singular fibres}
We have seen that for $u \in S\setminus D$, the restriction of a global relative $n$-form 
$\eta$  to a smooth fibre $X_u$ determines a cohomology class
\[ [\eta|_{X_u}] \in H^n(X_u)\] 
If $u \in D$ then the fiber $X_u$ is singular, but the form $\eta$ still can be integrated 
over $n$-cycles in $X_u$ and gives rise to a well defined cohomology class in $H^n(X_u)$. 
We sketch the argument. Suppose  $\g_1$ and $\g_2$ are $n$-cycles in
$C_u$ and  $\G$ is a $n+1$-chain in $X_u$ with $\p \G=\g_1-\g_2$. After subdivision, we can write $\G=\G'
+\G''$ where $\G'$ is a $n+1$-chain in the smooth part of $C_u$
and $\G''=\G\cap \bigcup_i B_\ve(p_i)$, where the $p_i$ are the singular points of $C_u$.  Then 
$$\int_{\g_1}\eta-\int_{\g_2}\eta=\int_{\p\G'}\eta+\int_{\p\G''}\eta.$$
The first integral on the right hand side vanishes by Stokes's Theorem. The contribution
 $\int_{\p\G''}\eta$ tends to $0$ as $\ve\to 0$, as the integrand is regular and one is integrating over ever smaller sets.\\

In general, if $Z$ is any analytic space with singularities we can look at the de Rham complex
 $(\Omega_Z^{\bullet},d)$ of K\"ahler forms, and integration over $p$-cycles is well-defined and determines a {\em de Rham evaluation map}
\[ DR: H^p(\Gamma(Z,\Omega_Z^{\bullet}))\to H^p(Z,\CC) \]
If $Z$ is a Stein space, then this map is even {\em surjective}. 
The reason is the following: because $Z$ is Stein, the group at the left hand
side is equal to the $p$-th hypercohomology group $\HH^p$ of the de Rham complex 
$(\Omega^{\bullet}_Z,d)$. The map of complexes $\CC_Z \to (\Omega^{\bullet}_Z,d)$
(induced by the inclusion map $\CC_Z \to \OO_Z$) induces a map
\[\alpha: H^p(Z;\CC)=\HH^p(\CC_Z) \to \HH^p((\Omega_Z^{\bullet},d)) = H^p(\Gamma(Z,\Omega_Z^{\bullet}))\]
and it is shown in \cite{looijenga}, p.141, that $DR$ is a {\em section} of the
map $\alpha$, i.e. $DR \circ \alpha =Id$. In particular, $DR$ is surjective.

Proposition $(8.5)$ of \cite{looijenga} provides a relative version of this
argument, that we specialise to our situation of $\pi:X \too S$.
For this we look at the (truncated) relative de Rham complex
\[ 0 \too \OO_X \too \Omega^1_{X/S} \too \ldots \too \Omega^{n-1}_{X/S} \too \Omega^n_{X/S}\]
The  cohomology sheaves are $\pi^{-1}\OO_S$ in degree $0$ and 
\[ {\mathcal H}^n:=\Omega^{n}_{X/S}/d\Omega^{n-1}_{X/S},\]
a sheaf supported on $\wt D$, in degree $n$.
The direct image $(\pi_*\Omega_{X/S}^{\bullet},d)$ also has two non-vanishing
cohomologies, namely $\pi_*\pi^{-1}\OO_S$ in degree $0$ and $\s H'$ in degree $n$. 
The two hypercohomology spectral sequences now produces a short exact sequence
\[ 0 \too R^n\pi_*(\CC_X) \otimes \OO_S \stackrel{\alpha}{\longrightarrow} \s H'
\stackrel{\beta}{\longrightarrow} \pi_*{\mathcal H}^n \too 0 \]
(\cite[Proposition 8.5]{looijenga}).
Restriction to a (geometrical) fibre over $u$ gives an exact sequence
\[ 0 \too H^n(X_u) \too \s H'|_u \too \pi_*{\mathcal H}_u^n \too 0\]
In the middle we have a vector space of dimension $\mu$, at the right hand side
a direct sum of vector spaces of dimension $\mu_i$, the Milnor numbers of the
singularities appearing in the fibre over $u$. So indeed
\[ \dim H^n(X_u)=\mu -\sum \mu_i\]
The composition 
\[ R^n\pi_*(\CC_X) \too R^n\pi_*(\CC_X) \otimes \OO_S \stackrel{\alpha}{\too}
\s H'\]
is for any $u \in S$ a {\em section} to the deRham-evaluation map
\[DR_u: \s H'_u \too H^n(X_u,\CC)\]

\begin{corollary}
For all $u\in S$, the deRham evaluation map 
\[\s H'_u \to H^n(X_u); \eta \to [\eta|_{X_u}]\] 
is surjective.
\end{corollary}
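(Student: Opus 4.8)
The plan is to obtain the corollary as an immediate formal consequence of the sectioning statement just established through Looijenga's relative de Rham comparison. That discussion produced, for each $u\in S$, the composite
\[ s_u\colon H^n(X_u)=(R^n\pi_*(\CC_X)\otimes\OO_S)|_u\stackrel{\alpha}{\longrightarrow}\s H'_u, \]
and asserted that $s_u$ is a section of the de Rham evaluation map $DR_u\colon\s H'_u\to H^n(X_u)$, i.e. that $DR_u\circ s_u=\mathrm{id}_{H^n(X_u)}$.

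First I would simply record the elementary fact that a linear map admitting a right inverse is surjective: given a class $c\in H^n(X_u)$, the element $s_u(c)\in\s H'_u$ satisfies $DR_u(s_u(c))=c$, so $c$ lies in the image of $DR_u$. Unwinding the definitions, $s_u(c)$ is represented by a relative $n$-form $\eta$ defined near $X_u$ whose restriction to the fibre has class $[\eta|_{X_u}]=c$, which is exactly the assertion of the corollary. No computation is needed beyond this deduction.

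The only point I would take care to check is that the argument is valid for \emph{all} $u\in S$, in particular at discriminant points $u\in D$ where $X_u$ is singular. This is precisely the reason the construction is routed through $R^n\pi_*(\CC_X)$ and the spectral-sequence map $\alpha$, rather than being attempted fibrewise: at a singular fibre the target $H^n(X_u)$ is still the honest cohomology computed by integrating K\"ahler $n$-forms over $n$-cycles, as recalled at the beginning of the subsection, and it is $\alpha$ that supplies a global section restricting to the required right inverse. I would also verify the direction of the section identity (that it reads $DR_u\circ s_u=\mathrm{id}$ rather than $s_u\circ DR_u=\mathrm{id}$), since it is this direction that yields surjectivity rather than injectivity.

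Accordingly, the genuine work lies entirely upstream, in the relative comparison of the complex $(\pi_*\Omega^\bullet_{X/S},d)$ with the constant sheaf that furnishes $\alpha$ together with its sectioning property; given that input there is no remaining obstacle, and the corollary follows in one line.
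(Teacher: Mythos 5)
Your proposal is correct and follows exactly the paper's (implicit) argument: the corollary is drawn as an immediate consequence of the preceding assertion that the composite $R^n\pi_*(\CC_X)\to R^n\pi_*(\CC_X)\otimes\OO_S\stackrel{\alpha}{\to}\s H'$ is, for every $u\in S$, a section of the de Rham evaluation map $DR_u$, so that $DR_u$ has a right inverse and is therefore surjective. Your additional checks (the direction of the section identity, and validity at singular fibres via the spectral-sequence construction of $\alpha$) are sensible but do not change the substance.
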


\subsection{The period map}
The theory of the period map was developed independently by {\sc Varchenko} and {\sc K. Saito}
around the same time and has numerous applications. The basic idea is quite simple.
Let us first fix a relative $n$-form $\eta$ and a point $u \in S \ssm D$ and a horizontal 
basis $\gamma_1(s),\gamma_2(s),\ldots,\gamma_{\mu}(s) \in H_n(X_s)$ for points $s$ in a 
neighbourhood $U$ of $u$. The {\em period map} 
\[P_{\eta}: U \too \CC^{\mu},\;\;\;s \mapsto \left(\int_{\gamma_1(s)}\eta,\int_{\gamma_2(s)}\eta,\ldots,\int_{\gamma_{\mu}(s)} \eta \right)\]
send a point $s$ to the tuple of periods of the form $\eta$. By further parallel transport one
extends $P_\eta$ to a (multi-valued) map 
\[P_{\eta}: S \ssm D \too \CC^{\mu}\]
between spaces of the same dimension $\mu$. The form $\eta$ is called  {\it non-degenerate} if it is non-degenerate at all points $u\in S\ssm D$,  which means that 
$P_{\eta}$ is a local isomorphism near $u$. Of course, this can be tested by looking at the
derivative of this map, which can be identified with the map
\[ \nabla P_{\eta,u}: T_u S\to H^1(X_u),\;\;\vartheta \mapsto [\nabla_{\vartheta} \eta|X_u] \in H^n(X_u) \] 
which is the geometrical fibre at $u$ of the sheaf map
\[ \Theta_{S \ssm D} \too \s H^*,\;\;\;\vartheta \mapsto \nabla_{\vartheta}\eta\]
This map extends to a sheaf map
\[\Theta_S \too \s H'',\;\;\; \vartheta \mapsto \nabla_{\vartheta}\eta\]
which is an {\em isomorphism} in case $\eta$ is non-degenerate. 

\begin{proposition}
A non-degenerate relative $n$-form $\eta$ gives rise to a commutative diagram
$$\xymatrix{0\ar[r]&\s H'\ar[r]&\s H''\ar[r]&\O^{n+1}_{X/S}\ar[r]&0\\
0\ar[r]&\Theta_S(-\log D)\ar[r]\ar[u]&\Theta_S\ar[r]\ar[u]&\OO_\D\ar[r]\ar[u]&0}
$$
with exact rows and where the vertical maps are the isomorphisms described in the last paragraph and where the map
at the right hand side is induced by multiplication by $\o=d\eta$.
\end{proposition}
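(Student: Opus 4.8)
The two rows are already at our disposal: the top is the exact sequence \eqref{bries}, and the bottom is the short exact sequence $0\to\Theta_S(-\log D)\to\Theta_S\to\OO_{\D}\to 0$ obtained earlier. The middle vertical map $c\colon\Theta_S\to\s H''$, $\vt\mapsto\nabla_\vt\eta$, is an isomorphism precisely because $\eta$ is non-degenerate, as recorded just above the statement. The plan is therefore to produce the left and right verticals as, respectively, the restriction of $c$ to the sub-objects and the map it induces on quotients, so that both squares commute by construction, and then to identify the induced quotient map with multiplication by $\omega=d\eta$ and to deduce that all three verticals are isomorphisms by the snake lemma.

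The left square is the easy part. Since the Gau\ss--Manin connection restricts to a logarithmic connection $\nabla\colon\s H'\to\s H'\otimes_{\OO_S}\O^1_S(\log D)$ and $\eta$ is a section of $\s H'$, for every $\chi\in\Theta_S(-\log D)$ we have $\nabla_\chi\eta\in\s H'$. Thus $c$ carries $\Theta_S(-\log D)$ into $\s H'$, the left vertical map $\chi\mapsto\nabla_\chi\eta$ is simply the restriction of $c$, and the left square commutes tautologically. Consequently $c$ descends to a map $b\colon\OO_{\D}=\Theta_S/\Theta_S(-\log D)\to\s H''/\s H'=\Omega^{n+1}_{X/S}$ on cokernels, and the right square commutes by construction.

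It remains to identify $b$. Using the Kodaira--Spencer isomorphism $dF\colon\Theta_S\to\OO_\Sigma$, $\vt\mapsto\vt(F)$, the bottom cokernel is $\OO_\Sigma/(F)=\OO_{\D}$; using the isomorphism $\OO_\Sigma\cong\O^{n+1}_F$ of \S\ref{symiso}, the top cokernel is $\O^{n+1}_F/F\O^{n+1}_F\cong\OO_{\D}$ as well. Under these identifications I must show that the image of $\nabla_\vt\eta$ in $\Omega^{n+1}_{X/S}$ equals $\vt(F)\cdot[d\eta]$, i.e. that $b$ is multiplication by $[d\eta]$. To see this I lift $\vt$ to a field $\wt\vt$ on $B\times S$ and use the Lie-derivative formula $\nabla_\vt\eta=[\mathrm{Lie}_{\wt\vt}\eta]$ together with Cartan's formula: the component of $\wt\vt$ tangent to $V(F)$ produces a class lying in $\s H'$, which dies in the cokernel, while the remaining $\p_t$-directed part of the lift carries coefficient $\vt(F)$ and contributes, via the standard fact that the composite $\s H'\xrightarrow{\nabla_{\p_t}}\s H''\to\Omega^{n+1}_{X/S}$ sends $[\eta]$ to $[d\eta]$, exactly $\vt(F)\,[d\eta]$. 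Matching the relative Gelfand--Leray (residue) description of the cokernel in \eqref{bries} against this Lie-derivative computation is the one genuinely technical step, and is where I expect the real work to lie; here I would lean on the residue formalism of \cite{looijenga}.

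Granting that $b$ is multiplication by $[d\eta]$, the conclusion is formal. Because $c$ is an isomorphism, $\ker c=\coker c=0$, so the snake lemma applied to the map of short exact sequences shows that $b$ is surjective and that $\ker b\cong\coker a$, where $a$ denotes the left vertical map. Now $\Omega^{n+1}_{X/S}$ is free of rank one over the local ring $\OO_{\D}$, so a surjective multiplication map is automatically an isomorphism; hence $[d\eta]$ is a unit, $b$ is an isomorphism, and therefore $\coker a=0$. Since $a$ is the restriction of the injective map $c$ it is itself injective, so $a$ is an isomorphism as well, which completes the proof.
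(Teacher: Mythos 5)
Your formal scaffolding is sound and, as far as it goes, matches the only route available: the two rows are already established in the paper, the middle vertical $\vt\mapsto\nabla_\vt\eta$ is the non-degeneracy isomorphism recorded just before the statement, the left square commutes because the logarithmic Gau{\ss}--Manin connection preserves $\s H'$, and the snake-lemma endgame (together with $\Omega^{n+1}_{X/S}\cong\OO_{\D}$ and the fact that a surjective endomorphism of a finitely generated module over a Noetherian ring is injective) is correct. Note, however, that the paper itself offers no proof at all: it simply refers the reader to Saito \cite[p.~1248]{Sai83}, so there is nothing internal to compare your argument against.

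The genuine gap is exactly where you locate it, and it is not a minor technicality: the assertion that the induced map on cokernels is \emph{multiplication by} $[d\eta]$ is the entire non-formal content of the proposition, and your sketch of it does not quite parse. You propose to split a lift $\wt\vt$ of a general $\vt\in\Theta_S$ into ``a component tangent to $V(F)$'' plus ``a $\p_t$-directed part carrying coefficient $\vt(F)$''; but $\Theta_S$ is not generated by $\Theta_S(-\log D)$ together with the single unit direction $\p/\p u_\mu$ --- the quotient $\Theta_S/\Theta_S(-\log D)\cong\OO_\Sigma/(F)$ needs all $\mu$ classes $g_1,\ld,g_\mu$ as $\OO_S$-generators --- so no such decomposition of the lift exists in general, and the reduction to the single fact about $\nabla_{\p_t}[\eta]\mapsto[d\eta]$ is not justified. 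What is actually needed is the Gelfand--Leray description of $\s H''$ and the computation, for an arbitrary lift $\wt\vt$ of $\vt$ to $B\times S$, that the class of $\nabla_\vt[\eta]$ in $\s H''/\s H'\cong\Omega^{n+1}_{X/S}$ equals $\overline{dF(\vt)}\cdot[d\eta]$ (using $\wt\vt(F)\equiv\vt(F)$ modulo the relative Jacobian ideal, hence on $\Sigma$); this is the computation carried out in \cite{Sai83} and in \cite{looijenga}. Until that identity is proved, the commutativity of the right-hand square --- and hence the proposition --- is not established.
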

This diagram can be found in \cite[p. 1248]{Sai83}. 

From this we get immediately the following

\begin{theorem}\label{periodsurject} If $\eta$ is non-degenerate, then for each point $u \in S$ one
obtains an isomorphism
\[ \nabla P_{\eta,u}: T^{log D}_uS \too \s H'_u \]
The composition with the de Rham evaluation map gives a surjection
\[ \text{DR}\circ\nabla P_{\eta,u}: T^{log D}_uS \too H^n(X_u)\]

\end{theorem}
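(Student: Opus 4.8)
The plan is to read off both assertions from the commutative diagram of the preceding Proposition by passing to geometric fibres over $u$, the only genuine input being the surjectivity of the de~Rham evaluation map established in the Corollary of Section~3.2.

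For the isomorphism I would start from the left-hand vertical arrow of that diagram,
\[\nabla\eta\colon\Theta_S(-\log D)\too\s H',\qquad\vt\mapsto\nabla_{\vt}\eta.\]
Non-degeneracy of $\eta$ makes the middle arrow $\Theta_S\to\s H''$ an isomorphism by definition, and the right-hand arrow $\OO_\D\to\O^{n+1}_{X/S}$ (multiplication by $\o=d\eta$) is an isomorphism as well; applying the five lemma to the two exact rows shows that $\nabla\eta$ is an isomorphism of $\OO_S$-modules. Since $D$ is a free divisor, $\Theta_S(-\log D)$ and $\s H'$ are both locally free of rank $\mu$, so reducing modulo the maximal ideal $\mm_{S,u}$ produces an isomorphism of the $\mu$-dimensional geometric fibres. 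This fibre map is precisely $\nabla P_{\eta,u}\colon T^{\log D}_uS\to\s H'_u$, sending the class of $\vt$ to $\nabla_{\vt}\eta\bmod\mm_{S,u}$, which settles the first assertion.

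For the surjection I would simply post-compose with the de~Rham evaluation map $\mathrm{DR}_u\colon\s H'_u\to H^n(X_u)$. The Corollary of Section~3.2 guarantees that $\mathrm{DR}_u$ is surjective for every $u\in S$, since it admits a section coming from $R^n\pi_*(\CC_X)$. A composite of an isomorphism followed by a surjection is surjective, so $\mathrm{DR}_u\circ\nabla P_{\eta,u}$ maps $T^{\log D}_uS$ onto $H^n(X_u)$; tracing the identifications shows that this composite is $\vt\mapsto[\nabla_{\vt}\eta|_{X_u}]$, the restriction to logarithmic directions of the period-map differential.

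The argument is thus formal once three points are pinned down, and I expect the main (essentially bookkeeping) obstacle to be the identification of $T^{\log D}_uS$ in this statement with the geometric fibre $\Theta_S(-\log D)\otimes_{\OO_S}\CC$, i.e.\ the fibre of the logarithmic tangent bundle; this has dimension $\mu$ and coincides with the honest logarithmic tangent space only away from $D$, so some care is needed to match conventions. Granting this, no computation remains: both statements follow immediately from the Proposition's diagram together with the surjectivity of $\mathrm{DR}_u$, the substantive work having already been invested in proving those two earlier results.
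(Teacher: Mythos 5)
Your argument is exactly the paper's: the authors give no separate proof, stating only that the theorem follows immediately from the commutative diagram of the preceding Proposition (whose vertical arrows are already asserted to be isomorphisms, so even the five lemma is more than they invoke) together with the Corollary that the de Rham evaluation map is surjective. Your closing caveat — that for $u\in D$ the honest logarithmic tangent space $T^{\log D}_uS$ has dimension $\mu-\sum_i\tau_i<\mu$ and so can only be identified with the geometric fibre of $\Theta_S(-\log D)$ after the same abuse the paper itself commits in writing $\s H'_u$ — is the one genuinely delicate point, and you are right to flag it rather than paper over it.
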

In fact the restriction of  $\text{DR}\circ\nabla P_{\eta,u}$ to 
$T_u\s R(u)$ 
is an isomorphism. 
This  statement was shown by Varchenko to hold in special cases and conjectured to
hold in general, \cite{Var87}. A proof basically along these lines
was sketched to us in a letter by {\sc Hertling}, \cite{Hert}. 

\section{The case of curves}
We specialise to the case $n=1$, so $C:=X_0$ is a plane curve singularity. If $C$ has $r$ branches then by the formula of {\sc Milnor} 
\[\mu=2\delta-r+1,\] 
and for $u \in S\ssm D$ the fibre $C_u:=X_u$ is a smooth Riemann surface
of genus $\delta-r+1$ with $r$ boundary circles. For $u\notin D$, $C_u$ is a smooth Riemann surface of genus $\delta$. For $u \in D$ the curve $C_u$ is singular, say with
singularities $(C_u,p_i)$, $i=1,2,\ldots,N$ and its normalisation $\wt C_u$ has genus
\[ \delta(C)-\delta(C_u)\] 
where $\delta(C_u)=\sum_{i=1}^N \delta(C_u,p_i)$.

\subsection{Intersection form}
Now assume that $C$ is irreducible. For fixed $u\in S$ let $C_u^*=C_u/\p C_u$ be the closed Riemann surface obtained by shrinking $\p C_u$ to a point, and let $\tilde C_u$ and $\tilde C_u^*$ be the normalisations of $C_u$ and $C_u^*$. 

The diagram 
$$ \xymatrix{\wt C_u\ar[r]\ar[d]^n&\widetilde C_u^*\ar[d]^n\\
C_u\;\ar[r]^j&C_u^*}
\hskip .2in\text{gives rise to the diagram}\hskip .2in
\xymatrix{ H^1(\wt C_u)&H^1(\widetilde C_u^*)\ar[l]_{\simeq}\\
H^1(C_u)\ar[u]_{n^*}&H^1(C_u^*)\ar[u]_{n^*}\ar[l]_{\simeq}}
$$
in which the vertical arrows are surjections. Write $I_u$ and $\tilde I_u$ for the intersection forms on
$C_u$ and $\wt C_u$. These are pulled back from the intersection forms on the closed curves $C_u^*$ and $\wt C_u^*$  by means of the isomorphisms in the preceding diagram. Because $n_*: H_2(\wt C_u,\p \wt C_u)\simeq H_2(C,\p C)$, it follows by functoriality that
\beq\label{iso0}\wt I_u(n^*a,n^*b)=I_u(a,b),\eeq
Note that the form $\wt I_u$ is non-degenerate.

\subsection{de Rham version of $I_u$}
The pairing $I_u$ has the following {\sc de Rham} description. 
We choose a a pair of collars $U \subset V \subset C_u$ around the boundary $\partial C_u$ 
and a $C^{\infty}$ bump-function $\rho$, equal to $1$ on $U$ and $0$ outside $V$. If $\eta$ is
a holomorphic (K\"ahler) $1$-form on $C_u$, it follows from Stokes theorem that  
\[ \int_{\partial C} \eta =0\]
By integration we can therefore find a holomorphic function $\alpha$ on $V$ with $d\alpha=\eta$
on $V$. The form $\eta$ is cohomologous to $\wt \eta:=\eta-d \rho \alpha$ and as 
$\rho=1$ on $U$ and there $ d\alpha=\eta$, it follows that $\wt \eta$ is a form
with compact support, contained in $C\setminus U$. It is holomorphic and equal to $\eta$
outside $V$, but only $C^{\infty}$ on the annulus $V \setminus U$.
One then has, using Stokes theorem

\[I_u([\eta],[\eta'])=I_u([\wt \eta],[\eta'])=-\int_{\partial C} \alpha \eta'\]

More details are given in Section 7. 

\subsection{Extension to $\s H^*$ and $\s H'$}

The pairings $I_u$ on $H^1(C_u)$ combine to give a perfect duality
$$I:\s H^*\times\s H^*\to\OO_S$$
over $S\ssm D$. Because of its topological origin, the intersection form is {\em horizontal}
with respect to the Gauss-Manin connection: for any two sections $s_1, s_2$ of 
$\s H^*$, 
$$d\bigl(I(s_1,s_2)\bigr)=I(\nabla s_1,s_2)+I(s_1,\nabla s_2).$$

Using a relative version of the above {\sc de Rham}-description of the intersection pairing
one obtains an extension of $I$, still called $I$, to $\s H'$:
$$I: \s H' \times \s H' \to \OO_S$$ 
For two sections $\eta_1,\eta_2 $ of $\s H'$ one has
\beq\label{keyform} I(\eta_1,\eta_2)(u)=I_u([\eta_1{|C_u}],[\eta_2{|C_u}])\eeq

\subsection{Pulling back the intersection form}\label{pullback}
Using the period map one can pull-back the intersection form on $H^1(C_u)$ to obtain a
$2$-form on $S$.
Let us first start with an arbitrary section $s \in \s H^*$ over $S \setminus D$. 
From it we obtain  a 2-form 
\[\O=s^*I \in \Omega^2_{S \ssm D}\] 
on $S\ssm D$ by the formula
\[ \O(\vt_1,\vt_2):= I(\nabla_{\vt_1} s, \nabla_{\vt_2} s)\]
\begin{proposition}
The form $\O$ is closed.
\end{proposition}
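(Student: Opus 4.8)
The plan is to verify $d\O=0$ by evaluating the three-form $d\O$ on a commuting frame, where the invariant (Cartan) formula for the exterior derivative loses its Lie-bracket terms and only the directional-derivative terms survive. Since $d\O$ is $\OO_{S\ssm D}$-multilinear and alternating, it suffices to check that $d\O(\p/\p u_i,\p/\p u_j,\p/\p u_k)=0$ for the coordinate vector fields $\p/\p u_i$ on $S\ssm D$, all of whose pairwise brackets vanish.

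First I would record the three ingredients that drive the cancellation. (1) Horizontality of the intersection form with respect to the Gau{\ss}-Manin connection, $d\bigl(I(s_1,s_2)\bigr)=I(\nabla s_1,s_2)+I(s_1,\nabla s_2)$, which lets me differentiate $\O(\p/\p u_j,\p/\p u_k)=I(\nabla_{\p/\p u_j}s,\nabla_{\p/\p u_k}s)$ under $I$. (2) Flatness of the Gau{\ss}-Manin connection over $S\ssm D$: since the coordinate fields commute, the curvature vanishes and the second covariant derivatives are symmetric, $\nabla_{\p/\p u_i}\nabla_{\p/\p u_j}s=\nabla_{\p/\p u_j}\nabla_{\p/\p u_i}s$. (3) Skew-symmetry of the intersection form $I$ on $H^1$ of a curve, $I(a,b)=-I(b,a)$, which is exactly what makes $\O$ a genuine $2$-form in the first place.

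Writing $s_i:=\nabla_{\p/\p u_i}s$ and $a_{ij}:=\nabla_{\p/\p u_i}\nabla_{\p/\p u_j}s$, horizontality gives
$$\frac{\p}{\p u_i}\,\O\Bigl(\frac{\p}{\p u_j},\frac{\p}{\p u_k}\Bigr)=I(a_{ij},s_k)+I(s_j,a_{ik}),$$
and the three contributions to $d\O(\p/\p u_i,\p/\p u_j,\p/\p u_k)$ assemble into three pairs. Using $a_{ij}=a_{ji}$ (flatness), the pair $I(a_{ij},s_k)-I(a_{ji},s_k)$ cancels; using skew-symmetry together with $a_{ik}=a_{ki}$, the pair $I(s_j,a_{ik})+I(a_{ki},s_j)$ cancels; and using $a_{jk}=a_{kj}$, the remaining pair $-I(s_i,a_{jk})+I(s_i,a_{kj})$ cancels. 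Hence $d\O=0$.

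I expect the only real delicacy to be the bookkeeping of signs in the expansion, together with keeping clear that both horizontality and flatness are invoked over $S\ssm D$ precisely where they are available; there is no conceptual obstacle, since the section $s$ and the connection $\nabla$ live on the open set $S\ssm D$ on which $\O$ is defined. A coordinate-free variant, replacing $\p/\p u_i$ by arbitrary fields $\vt_1,\vt_2,\vt_3$, works identically: the surviving bracket terms $\O([\vt_a,\vt_b],\vt_c)$ cancel against the curvature terms $I(\nabla_{[\vt_a,\vt_b]}s,\,\cdot\,)$ produced by expanding $\nabla_{\vt_a}\nabla_{\vt_b}-\nabla_{\vt_b}\nabla_{\vt_a}$ via flatness.
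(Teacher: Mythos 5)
Your proof is correct and follows essentially the same route as the paper: evaluate $d\O$ on commuting vector fields so the Cartan formula reduces to directional derivatives, then expand via horizontality of $I$, cancel pairs using flatness of $\nabla$ (symmetry of second covariant derivatives for commuting fields), and kill the remaining pair by skew-symmetry of $I$. The only cosmetic difference is that you specialise to the coordinate frame $\p/\p u_i$ where the paper uses arbitrary commuting germs; the cancellation pattern is identical.
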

\begin{proof}
This is `clear' as we are pulling back the `constant form $I$', but here is a nice
direct calculation: if 
$a, b$ and $c$ are germs of commuting vector fields on $S$ then 
$$d(s^*I)(a,b,c)=d\bigl(I(a,b)\bigr)(c)-d\bigl(I(a,c)\bigr)(b)+d(\bigl(I(b,c)\bigr)(a)$$
$$= I(\nabla_c\nabla_a s, \nabla_b s)+I(\nabla_a s, \nabla_c\nabla_b s)$$
$$-I(\nabla_b\nabla_a s, \nabla_c s)-I(\nabla_a s, \nabla_b\nabla_c s)$$
\beq\label{close}+I(\nabla_a\nabla_b s, \nabla_c s)+I(\nabla_b s, \nabla_a\nabla_c s)\eeq
Because $a$ and $b$ commute and $\nabla$ is flat, $\nabla_a\nabla_b =\nabla_b\nabla_a $, and 
similarly for $\nabla_a\nabla_c$ and $\nabla_b\nabla_c$. This means that all terms on the right 
hand side in \eqref{close} cancel, except the first and last. These cancel because of the 
anti-symmetry of $I$.
\end{proof}

\begin{theorem}{\em(\cite{gv82})} If $s=\eta$ is a non-degenerate section of $\s H'$, then
 $\O$ is itself non-degenerate and hence symplectic, and moreover extends to all of $S$ as 
a symplectic form. 
\end{theorem}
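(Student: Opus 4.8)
The plan is to reduce the entire statement to a single clean assertion about the second Brieskorn lattice, namely that the intersection form $I$ extends from $\s H'$ to a \emph{perfect} $\OO_S$-bilinear pairing $\widetilde I\colon\s H''\times\s H''\to\OO_S$. Away from $D$ there is nothing to do: for $u\in S\ssm D$ non-degeneracy of $\eta$ means that $\nabla P_{\eta,u}\colon T_uS\to H^1(C_u)$ is an isomorphism, and since $C$ is irreducible the intersection form $I_u$ on $H^1(C_u)$ is non-degenerate; as $\O_u$ is by construction the pull-back of $I_u$ along $\nabla P_{\eta,u}$, it is non-degenerate there, and it is closed by the preceding Proposition. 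So the real content is the extension across $D$, and I claim this is formal once $\widetilde I$ is produced.

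Assume then that $\widetilde I$ is a perfect pairing. The middle vertical arrow of the commutative diagram preceding Theorem~\ref{periodsurject} is the $\OO_S$-isomorphism $\Theta_S\to\s H''$, $\vt\mapsto\nabla_\vt\eta$; here one uses that for \emph{every} $\vt\in\Theta_S$ (not merely a logarithmic one) the Gau\ss--Manin connection sends $\eta\in\s H'$ into the larger lattice $\s H''$. Setting $\O(\vt_1,\vt_2):=\widetilde I(\nabla_{\vt_1}\eta,\nabla_{\vt_2}\eta)$ thus defines an $\OO_S$-valued skew form on $\Theta_S$, that is, a \emph{regular} form $\O\in\O^2_S$ agreeing with the old $\O$ on $S\ssm D$; regularity is exactly the statement that $\O$ pairs the free module $\Theta_S$ into $\OO_S$. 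Because the diagram identifies $\Theta_S$ with $\s H''$ fibrewise as well, perfectness of $\widetilde I$ gives non-degeneracy of $\O_u$ at every $u\in S$, including $u\in D$. Finally $d\O=0$ holds on the dense open set $S\ssm D$, hence on all of $S$, since the coefficients of $d\O$ are now holomorphic and vanish on a dense set. This yields that $\O$ is symplectic on all of $S$.

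To construct $\widetilde I$ I would use the de Rham description of $I_u$ recorded above, $I_u([\eta_1],[\eta_2])=-\int_{\p C_u}\alpha_1\,\eta_2$ with $d\alpha_1=\eta_1$ on a collar of $\p C_u$. A section of $\s H''$ is represented by a Gelfand--Leray form which is holomorphic on the smooth part of the fibre, and the boundary $\p C_u$ stays uniformly away from the singular points of the fibre as $u\to D$; hence the boundary integral still makes sense and depends holomorphically on $u$, producing an extension $\widetilde I$ with values in $\OO_S$. One must of course check that it is independent of the chosen representatives and of the collar, which is a Stokes-theorem computation.

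The hard part will be perfectness of $\widetilde I$ at points $u\in D$. Both $\s H''$ and its dual are free of rank $\mu=2\delta$, so it suffices to show that the fibrewise skew form $\widetilde I_u$ on $\s H''_u$ is non-degenerate for $u\in D$; equivalently, writing $e_i=\nabla_{\p/\p u_i}\eta$ for the induced basis of $\s H''$, that $\Pf\bigl(\widetilde I(e_i,e_j)\bigr)$ is a unit. This is delicate precisely because the form $I$ that $\widetilde I$ extends is itself fibrewise \emph{degenerate} over $D$: its value $I_u$ factors through the de Rham evaluation $\s H'_u\twoheadrightarrow H^1(C_u)$, which has a kernel of dimension $\sum_i\mu_i$. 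Thus the non-degeneracy of $\widetilde I_u$ on all of $\s H''_u$ must be created by the extension, i.e. by the coupling of $\s H'$ with the quotient $\s H''/\s H'\cong\O^{n+1}_{X/S}$ appearing in the right-hand column of the diagram. I would establish it either by a local analysis at the singular points of $C_u$, matching the boundary pairing against the Grothendieck self-duality of that quotient, or, following Givental' and Varchenko, by propagating non-degeneracy from the generic locus using horizontality of $I$ under the logarithmic Gau\ss--Manin connection. This single non-degeneracy statement at $D$ is where essentially all the work lies; regularity of $\O$ and its symplecticity then drop out of the formal package of the second paragraph.
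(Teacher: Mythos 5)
The paper does not actually prove this statement --- it is quoted from Givental'--Varchenko \cite{gv82} and used as a black box --- so there is no internal proof to compare against; I can only judge your argument on its own terms. Your formal reduction is sound: off $D$ non-degeneracy and closedness are immediate, and \emph{if} the intersection form extends to a perfect $\OO_S$-bilinear pairing $\wt I$ on $\s H''\times\s H''$, then transporting it through the isomorphism $\Theta_S\to\s H''$, $\vt\mapsto\nabla_\vt\eta$, gives a regular $2$-form with unit Pfaffian, hence symplectic on all of $S$, with $d\O=0$ extending by continuity from $S\ssm D$.

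The genuine gap is that this perfectness of $\wt I$ along $D$ \emph{is} the theorem, and you do not prove it --- you say so yourself. It is not a technicality: over $u\in D$ the pairing you start from degenerates, since on $\s H'_u$ it factors through the de Rham evaluation onto $H^1(C_u)$, whose kernel has dimension $\sum_i\mu_i$; non-degeneracy of $\wt I_u$ must therefore be manufactured entirely by a perfect coupling between that kernel and the torsion quotient $\s H''/\s H'\cong\O^{n+1}_{X/S}$, and neither of your two suggested routes is carried out. The second one in particular cannot work as stated: horizontality of $I$ under the logarithmic Gau\ss--Manin connection controls derivatives of the pairing along vector fields tangent to $D$, but gives no mechanism for propagating \emph{non-degeneracy} from $S\ssm D$ to points of $D$ (a holomorphic function nonvanishing on a dense open set can still vanish on $D$). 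The first route --- matching the boundary pairing against local duality at the singular points --- is essentially what Givental'--Varchenko and K.~Saito's higher-residue formalism do, and in the quasi-homogeneous case one can finish more cheaply by a weight argument showing the Pfaffian of $\O$ in the coordinates $u_i$ has weight zero, hence is a nonzero constant; but some such argument must actually be supplied. There is also a smaller unaddressed point: well-definedness of the boundary integral $-\int_{\p C_u}\alpha_1\eta_2$ on classes in $\s H''$, i.e.\ its vanishing on $d\pi_*(d\O^0_{X/S})$ and the existence of the primitive $\alpha_1$ on the collar (which uses $\int_{\p C_u}\eta_1=0$, a residue-theorem statement for Gelfand--Leray forms), is asserted but not checked.
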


\section{Results}

In \cite{gv82} one find the formulation of a {\em principle} that the types of degeneration that occur in the
fibres $C_u$ are reflected in the lagrangian properties of the corresponding strata. Our results can be seen
as a vindication of this principle in some special cases.

As before, we will consider the versal deformation $\pi:X \too S$ of an irreducible curve singularity,
a non-degenerate section $\eta$ of the Brieskorn-module $\s H'$ and the resulting symplectic form
$\O$ on $S$, obtained by pulling back the intersection form on the fibres $H^1(C_u)$.  
 
\subsection{The rank of $\O$ on the logarithmic tangent space}

Recall that for a point  $u \in S$, the logarithmic tangent space $T_u^{log D}S  \subset T_uS $ is the 
sub-space spanned by the logarithmic vector fields at $u$. 

\begin{theorem}\label{rank}
The rank of $\O$ restricted to $T_u^{log D}S$ is equal to the rank of $I_u$ on $H^1(C_u)$,
which is equal to $\dim H^1(\wt C_u)=2(\delta(C)-\delta(C_u))$.
\end{theorem}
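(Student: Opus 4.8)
The plan is to identify the restriction of $\O$ to the logarithmic tangent space with the pullback of the intersection form $I_u$ along the surjection of Theorem~\ref{periodsurject}, and then to apply a single elementary observation: \emph{if $\phi\colon V\to W$ is a surjective linear map and $B$ is a bilinear form on $W$, then $\operatorname{rank}(\phi^*B)=\operatorname{rank}(B)$.} Indeed, surjectivity of $\phi$ gives $\operatorname{rad}(\phi^*B)=\phi^{-1}(\operatorname{rad}B)$, so that $\dim\operatorname{rad}(\phi^*B)=\dim\ker\phi+\dim\operatorname{rad}B$, whence $\operatorname{rank}(\phi^*B)=\dim W-\dim\operatorname{rad}B=\operatorname{rank}(B)$. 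Both equalities asserted in the theorem will come from this principle.

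First I would express $\O$ on logarithmic vector fields through the Brieskorn module. For $\chi_1,\chi_2\in\Theta_S(-\log D)$ the logarithmic Gauss--Manin connection keeps $\nabla_{\chi_i}\eta$ inside $\s H'$, so $u\mapsto I(\nabla_{\chi_1}\eta,\nabla_{\chi_2}\eta)(u)$ is holomorphic on all of $S$. On the dense open set $S\ssm D$ it coincides with $\O(\chi_1,\chi_2)$ by the definition of $\O$, and since the Givental'--Varchenko theorem provides a holomorphic extension of $\O$ across $D$, the two holomorphic functions agree on all of $S$. Combining this with the key formula \eqref{keyform} yields, for every $u\in S$,
\[\O(\chi_1,\chi_2)(u)=I_u\bigl([\nabla_{\chi_1}\eta|C_u],[\nabla_{\chi_2}\eta|C_u]\bigr).\]
Because $\nabla$ is $\OO_S$-linear in the vector-field slot, $(\nabla_{\chi}\eta)|_u$ depends only on $\chi(u)$; thus the right-hand side depends only on $\chi_1(u),\chi_2(u)\in T^{\log D}_uS$, and $[\nabla_{\chi_i}\eta|C_u]=(\mathrm{DR}\circ\nabla P_{\eta,u})(\chi_i(u))$. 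Consequently the restriction of the bilinear form $\O_u$ to $T^{\log D}_uS$ equals $(\mathrm{DR}\circ\nabla P_{\eta,u})^*I_u$. This identification is the only genuinely delicate point; everything after it is formal.

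With this in hand, Theorem~\ref{periodsurject} tells us that $\mathrm{DR}\circ\nabla P_{\eta,u}\colon T^{\log D}_uS\to H^1(C_u)$ is surjective, so the rank principle above gives
\[\operatorname{rank}\bigl(\O|_{T^{\log D}_uS}\bigr)=\operatorname{rank}(I_u),\]
which is the first asserted equality.

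For the second equality I would use the de Rham picture of the intersection form. By \eqref{iso0} we have $I_u(a,b)=\wt I_u(n^*a,n^*b)$, where $n^*\colon H^1(C_u)\to H^1(\wt C_u)$ is surjective and $\wt I_u$ is non-degenerate. Applying the rank principle once more, now with $\phi=n^*$, gives $\operatorname{rank}(I_u)=\operatorname{rank}(\wt I_u)=\dim H^1(\wt C_u)$. Since the normalisation $\wt C_u$ has genus $\delta(C)-\delta(C_u)$, we conclude $\dim H^1(\wt C_u)=2(\delta(C)-\delta(C_u))$, completing the proof. The main obstacle, as noted, is the passage from the extrinsic definition of $\O$ on $S\ssm D$ to the intrinsic Brieskorn-module formula valid on all of $S$; once that is established, the two rank computations are immediate instances of the same linear-algebra fact.
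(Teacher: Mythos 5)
Your proposal is correct and follows essentially the same route as the paper's proof: identify $\O|_{T^{\log D}_uS}$ with the pullback of $I_u$ along the surjection $\mathrm{DR}\circ\nabla P_{\eta,u}$ of Theorem~\ref{periodsurject} via the key formula \eqref{keyform}, and then compute $\operatorname{rank}I_u$ from the non-degenerate form on $H^1(\wt C_u)$. You merely make explicit two steps the paper compresses --- the holomorphic continuation of $I(\nabla_{\chi_1}\eta,\nabla_{\chi_2}\eta)$ across $D$ and the elementary fact that rank is preserved under pullback by a surjection --- which is a faithful filling-in rather than a different argument.
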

\begin{proof}
Let $\s R(u)$ and $\s K(u)$ denote, respectively, the right-equivalence stratum and the $\s K$-equivalence 
stratum containing $u$. Recall that by \ref{periodsurject} the period map maps the space $T_u\s K(u)$ 
surjectively to $H^1(C_u)$; its restriction to $T_u\s R(u)\subseteq T_u\s K(u)$ maps isomorphically to $H^1(C_u)$.  
From \eqref{keyform} it follows that the rank of $\O$ on $T^{\log D}_uS$ at $u$ is equal to the rank of the 
intersection form $I_u$ on $H^1(C_u)$, which is equal to the rank of $H^1(\wt C_u)$, and therefore is equal to 
$\mu(C)-2\delta(C_u)=2\delta(C)-2\delta(C_u)$.
\end{proof}

\subsection{Coisotropicity of the Severi strata}
Recall that a subspace $V$ of a symplectic vector space $(W, \langle\_\,,\_\rangle)$ is 
{\it coisotropic} if $V^\bot\subset V$, where 
$V^\bot=\{w\in W:\langle v,w\rangle =0\ \text{for all }v\in V\}$.
A submanifold $X$ of a symplectic manifold $M$ is coisotropic if for all $x\in X$,
$T_xX$ is a coisotropic subspace of $T_xM$.
A singular subset $X$ of the symplectic manifold $M$ is coisotropic if $X_{\text{reg}}$ is coisotropic.

\begin{theorem}\label{coiso} All the Severi strata 
\[D(\delta)\subset D(\delta-1)\subset\cd\subset D(1)=D\]
are coisotropic with respect to $\O$. 
\end{theorem}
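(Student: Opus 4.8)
The plan is to prove coisotropicity stratum-by-stratum by working at a generic (smooth) point of each Severi stratum and comparing two subspaces of the symplectic vector space $(T_uS,\O_u)$: the tangent space $T_uD(k)$ and the logarithmic tangent space $T_u^{\log D}S$. Recall that a subspace $V$ is coisotropic precisely when $V^\perp\subset V$, equivalently when $\dim V^\perp=\dim T_uS-\dim V$ is small enough and $V^\perp$ sits inside $V$. Since coisotropicity for singular sets is defined via the regular locus $D(k)_{\text{reg}}=D(kA_1)$, I would fix a point $u$ where $C_u$ has exactly $k$ ordinary nodes ($A_1$ points) and no other singularities; there $D(k)$ is smooth, being a normal crossing of $k$ smooth local components of $D$, and $T_uD(k)$ is well-defined.

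The key computation is to identify $T_uD(k)$ and its symplectic orthogonal explicitly. First I would note that at such a node-point, $\s R(u)=\s K(u)$ and $T_uD(k)=T_u\s K(u)=T_u^{\log D}S$ by the Proposition equating the logarithmic tangent space with $T_u\s K(u)$. This is the crucial reduction: \emph{on the regular locus of $D(k)$, the tangent space to the stratum coincides with the logarithmic tangent space}. Now I invoke Theorem \ref{rank}: the rank of $\O$ restricted to $T_u^{\log D}S$ equals $2(\delta(C)-\delta(C_u))=2(\delta-k)$, since $\delta(C_u)=k$ at a generic point of $D(k)$. Writing $V=T_uD(k)=T_u^{\log D}S$, the radical (kernel) $V\cap V^\perp$ of the restriction $\O|_V$ has dimension $\dim V-\operatorname{rank}(\O|_V)=\dim V-2(\delta-k)$. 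The goal $V^\perp\subset V$ is equivalent to $\dim V^\perp=\dim(V\cap V^\perp)$, i.e. to $V^\perp$ being entirely contained in the radical of $\O|_V$.

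To close the argument I would compute both dimensions against the ambient $\dim T_uS=\mu=2\delta-r+1=2\delta$ (taking $r=1$, as $C$ is irreducible so its generic Milnor fibre, and the relevant count, uses $\mu=2\delta$). Since $\O$ is symplectic (nondegenerate) on all of $T_uS$, we have $\dim V^\perp=\mu-\dim V$. Meanwhile $\dim(V\cap V^\perp)=\dim V-2(\delta-k)$. Setting these equal gives $\mu-\dim V=\dim V-2(\delta-k)$, i.e. $\dim V=(\mu+2(\delta-k))/2$. Thus the inclusion $V^\perp\subset V$ will follow automatically from Theorem \ref{rank} precisely once I verify that $\dim T_u^{\log D}S$ has the value $\delta+ (\delta-k)$ at a generic $k$-nodal point; this dimension count is the step I expect to be the main obstacle, since it requires knowing $\dim T_u^{\log D}S=\dim T_u\s K(u)=\mu-\sum_i\tau(C_u,p_i)=\mu-k$ (each node contributing $\tau=\mu=1$). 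I would then check the arithmetic: with $\dim V=\mu-k=2\delta-k$ one needs $\dim V^\perp=\mu-(2\delta-k)=k$ and $\dim(V\cap V^\perp)=(2\delta-k)-2(\delta-k)=k$, so indeed $\dim V^\perp=\dim(V\cap V^\perp)$, forcing $V^\perp\subset V$. This confirms coisotropicity at every generic point of every $D(k)$, and since coisotropicity of the singular stratum is defined through its regular locus, the theorem follows; the only genuine care needed is ensuring the node-count bookkeeping ($\tau=1$ per node, $\delta(C_u)=k$, and the ambient dimension $\mu$) is consistent across the chain of identifications.
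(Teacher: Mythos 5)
Your proposal is correct and follows essentially the same route as the paper's own proof: identify $T_uD(k)$ with $T_u^{\log D}S$ at a generic $k$-nodal point via $\s R(u)=\s K(u)=D(k)$, apply Theorem \ref{rank} to get $\dim\ker(\O|_{T_uD(k)})=k$, and match this against $\dim T_uD(k)^\perp=\operatorname{codim}D(k)=k$ to force $T_uD(k)^\perp\subset T_uD(k)$. The dimension bookkeeping you flag as the main obstacle ($\tau=1$ per node, $\dim T_u^{\log D}S=\mu-k$, $\mu=2\delta$ for $r=1$) checks out exactly as in the paper.
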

\begin{proof}
Suppose that $u$ is a regular point of $D(k)$, so $C_u$ has exactly $k$ ordinary double points as singularities.
As $\s R(u)=\s K(u)=D(k)$ near $u$, the tangent space $T_uD(k)$ is the same as $T^{log D}_uS$. 
From theorem \ref{rank} the rank of $\Omega_{|T_uD(k)} $ is equal to $\mu-2k$, hence $\dim\ker \Omega_{|T_uD(k)} =k$. 
But from the non-degeneracy of $\O$ it follows that $T_uD(k)^\bot$ has dimension
equal to the codimension of $D(k)$, 
namely $k$.   Thus both sides in the relation
$$T_uD(k)^\bot\supset \ker (\O_u|_{T_uD(k)})$$
have dimension $k$, and are therefore equal. It follows that $T_uD(k)^\bot\subset T_uD(k)$.  That is, $D(k)$ is 
coisotropic.
\end{proof} 

The principle mentioned above explains this result by simply saying the near a regular point $u \in D(k)$ 
there are $k$ mutually non-intersecting cycles vanishing at $u$, which make up an isotropic subspace of  $H_1$. However, making this into 
an honest proof is another matter and leads to the considerations outlined above. The form
$\O$ is not unique, and moreover is determined globally rather than locally. One cannot prove anything by using a local normal form $N(\ell):=\{u_1\cd u_\ell=0\}$ for $D$ at a generic point $u_0$ of a Severi stratum $D(\ell)$, since the symplectic form one picks there will not in general coincide with the pullback of the form $\O$ by an isomorphism identifying $(D,u_0)$ with $(N(\ell),0)$.

\subsection{Equations for the $D(k)$}\label{equator}
Let $\chi_1,\ld,\chi_\mu$ be a basis for for $\Theta_S(-\log D)$, and  let $\o_1,\ld, \o_\mu$ be the dual basis for $\O^1_S(\log D)$. Considering $\O$ as an element of $\O^2_S(\log D)$, it can be expressed as the sum
$$\O=\sum_{i<j}\O(\chi_i,\chi_j)\o_i\wedge \o_j.$$
We denote the skew matrix with $i,j$'th entry $\O(\chi_i,\chi_j)$ by $\chi^t\O\chi$.
Then 
\beq\label{formid}\wedge^k\O=\sum_{1\leq i_1<\cd<i_{2k}\leq \mu}\Pf (\chi^t\O\chi(i_1,\ld,i_{2k}))\o_{i_1}\wedge\cd\wedge \o_{i_{2k}}
\eeq
where $\chi^t\O\chi(i_1,\ld,i_{2k})$ is the submatrix of $\chi^t\O\chi$ consisting of rows and columns $i_1,\ld,i_{2k}$ and 
$\Pf$ denotes its Pfaffian. 
The ideal generated by the coefficients of $\wedge^k\O$ with respect to the basis $\o_{i_1}\wedge \cd\wedge \o_{i_{2k}}$ of $\O^{2k}(\log D)$ is the same as the ideal $\Pf_{2k}(\chi\O\chi)$ of $2k\times 2k$ Pfaffians of $\chi^t\O\chi$. 
\begin{theorem}\label{th111} $D(k)=V\left(\Pf_{2(\delta-k+1)}(\chi^t\O\chi)\right)$. In particular, the $\delta$-constant stratum $D(\delta)$ is defined by the entries of $\chi^t\O\chi$. 
\end{theorem}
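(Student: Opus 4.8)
The plan is to reduce this statement, which is set-theoretic (we prove equality of reduced zero-loci, not of ideals), to the pointwise rank computation of Theorem \ref{rank} together with a standard fact about Pfaffian ideals of skew matrices.

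First I would fix $u\in S$ and analyse the rank of the skew matrix $\chi^t\O\chi$ evaluated at $u$, whose $i,j$ entry is $\O_u(\chi_i(u),\chi_j(u))$. By definition the vectors $\chi_1(u),\ld,\chi_\mu(u)$ span the logarithmic tangent space $T^{\log D}_uS$. Viewing evaluation $\chi\mapsto(\chi_i(u))_i$ as a surjective linear map $P:\CC^\mu\to T^{\log D}_uS$, the matrix $\chi^t\O\chi(u)$ is precisely the pullback $P^*(\O_u|_{T^{\log D}_uS})$. A short linear-algebra argument then shows that pulling a bilinear form back along a surjection preserves its rank: $x$ lies in the radical of $P^*\O_u$ iff $Px$ lies in the radical of $\O_u|_{T^{\log D}_uS}$, so that radical is the preimage under $P$ of $\mathrm{rad}(\O_u|_{T^{\log D}_uS})$, of dimension $(\mu-\dim T^{\log D}_uS)+\dim\mathrm{rad}(\O_u|_{T^{\log D}_uS})$; subtracting from $\mu$ gives $\mathrm{rank}(\chi^t\O\chi(u))=\mathrm{rank}(\O_u|_{T^{\log D}_uS})$. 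Invoking Theorem \ref{rank}, this common rank equals $2(\delta(C)-\delta(C_u))$.

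Next I would recall the determinantal description of Pfaffian ideals: a skew-symmetric matrix always has even rank equal to the largest size of a nonsingular principal submatrix, so (since $\det=\Pf^2$) the common zero-locus of all $2m\times 2m$ Pfaffians — equivalently, of the coefficients of $\wedge^m\O$ appearing in \eqref{formid} — is exactly the set where the rank is $\le 2m-2$. Applying this with $2m=2(\delta-k+1)$ gives
$$V\bigl(\Pf_{2(\delta-k+1)}(\chi^t\O\chi)\bigr)=\{u\in S:\mathrm{rank}\,\chi^t\O\chi(u)\le 2(\delta-k)\}.$$
Substituting the value $2(\delta(C)-\delta(C_u))$ from the previous step, the inequality $2(\delta(C)-\delta(C_u))\le 2(\delta(C)-k)$ simplifies to $\delta(C_u)\ge k$, which is precisely the defining condition for $u\in D(k)$. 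This yields the asserted equality, and the case $k=\delta$ (where $2m=2$, so the Pfaffians are the matrix entries themselves) gives the final sentence.

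The bulk of the work is already carried out in Theorem \ref{rank}; the only genuine obstacle is the careful handling of the first step. I expect the main subtlety to be confirming that the $\mu\times\mu$ matrix rank truly reproduces the intrinsic rank of $\O_u$ on $T^{\log D}_uS$ even at points where the $\chi_i(u)$ fail to be linearly independent — that is, at all $u\in D$ — which is exactly where the surjection-pullback lemma is needed. One should also make explicit that we assert only equality of reduced analytic sets; whether the Pfaffian ideals are radical is left open, as noted in the introduction.
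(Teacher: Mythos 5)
Your proposal is correct and follows essentially the same route as the paper's proof: reduce to the pointwise rank of $\chi^t\O\chi$ at $u$, identify that rank with the rank of $\O_u$ on $T^{\log D}_uS$, invoke Theorem \ref{rank}, and conclude via the characterisation of the rank of a skew matrix by its non-vanishing Pfaffians. The only difference is that you make explicit the surjection-pullback lemma justifying the rank identification at points where the $\chi_i(u)$ are linearly dependent, which the paper asserts without elaboration.
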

\begin{proof} 
Consider an arbitrary $u \in S$. The rank of the matrix $\chi^t\O\chi$ at $u$ is the rank of $\O$ 
restricted to the space of evaluations at $u$ of the vector fields in $\Theta_S(-\log D)_u$, which is precisely 
the logarithmic tangent space $T_u^{log D}S$. Theorem \ref{rank} states that the rank of $\O$ on $T^{\log}_uD$ at 
$u$ is equal to $2\delta(C)-2\delta(C_u)$.
As the rank of a skew-symmetric matrix is always even and equal to the size of the largest non-vanishing Pfaffian, 
it follows that $D(k)$ is precisely cut out by the Pfaffians of size  $2(\delta-k+1)$ of the matrix $\chi^t\O\chi$,
i.e. $D(k)=V\left(Pf_{2(\delta-k+1)}(\chi^t\O\chi)\right)$.
\end{proof}

A symplectic form $\O$ on a manifold $S$ gives rise to a Poisson bracket $\{\,\_ \, ,\,\_\,\}$
on the sheaf of functions on $S$, as follows: $\O$ determines an isomorphism 
$\O^1_S\to \Theta_S$ sending a 1-form $\a$ to a vector field $\a^\flat$. Then for functions $f,g$, 
$$\{f,g\}=\O((df)^\flat,(dg)^\flat).$$
The vector field $\chi_f:=(df)^\flat$ is called the {\em Hamiltonian vector field} associated to $f$. If $V \subset S$ is a sub-variety and $I(V) \subset \OO_S$ the ideal of functions vanishing on $V$, then
it is easy to show that for a regular point $x \in V$ one has
\beq\label{poie} T_xV^\bot=\{\chi_f(x):f\in I(V)_x\}.\eeq


The following is well-known:
\begin{proposition}
$V \subset S$ is coisotropic if and only if the ideal $I(V)$ is Poisson-closed: 
\[\{I(V),I(V)\}\subset I(V).\]
\end{proposition}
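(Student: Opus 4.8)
The plan is to reduce the geometric condition of coisotropy, which is checked pointwise on $V_{\text{reg}}$, to the algebraic condition of Poisson-closedness, by combining the formula \eqref{poie} with a density argument. First I would unwind the defining property of the musical isomorphism, namely $\O(\a^\flat, w) = \a(w)$ for all $w \in \Theta_S$. Applying this with $\a = df$ and $w = \chi_g$ gives the basic identity
\[\{f,g\} = \O(\chi_f,\chi_g) = df(\chi_g) = \chi_g(f),\]
and by the antisymmetry of $\O$ also $\{f,g\} = -\chi_f(g)$. This translates the bracket into a directional derivative, which is precisely the bridge to tangency conditions.

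Next I would fix a regular point $x \in V$ and record the standard description of the tangent space as the common kernel of the differentials of the defining functions: $T_xV = \{w \in T_xS : df_x(w) = 0 \text{ for all } f \in I(V)_x\}$. Formula \eqref{poie} identifies the symplectic orthogonal $T_xV^\bot$ with the set of Hamiltonian vectors $\{\chi_g(x) : g \in I(V)_x\}$, which is already a linear subspace since $g \mapsto \chi_g(x)$ is linear in $dg_x$. Consequently the coisotropy containment $T_xV^\bot \subset T_xV$ holds if and only if each $\chi_g(x)$ is annihilated by every $df_x$ with $f \in I(V)_x$, and by the identity of the first paragraph this says exactly that $\{f,g\}(x) = 0$ for all $f,g \in I(V)_x$. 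Thus $V$ is coisotropic if and only if every bracket $\{f,g\}$ with $f,g \in I(V)$ vanishes at every regular point of $V$.

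Finally I would pass from vanishing on $V_{\text{reg}}$ to membership in $I(V)$. Since $I(V)$ is by definition the (radical) ideal of all functions vanishing on $V$, and $V_{\text{reg}}$ is dense in $V$, a holomorphic function that vanishes on $V_{\text{reg}}$ vanishes on all of $V$ and hence lies in $I(V)$. Therefore the vanishing of $\{f,g\}$ on $V_{\text{reg}}$ for all $f,g \in I(V)$ is equivalent to $\{I(V),I(V)\} \subset I(V)$, which establishes both implications simultaneously.

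I expect the main obstacle to be the careful handling of this passage between the regular locus and the whole of $V$: coisotropy is defined only at smooth points, whereas Poisson-closedness is an ideal-theoretic condition on $V$ as a whole. The reconciliation rests on the density of $V_{\text{reg}}$ together with the radicality of $I(V)$, so I would make sure these hypotheses are genuinely available (in particular that $V$ is reduced). A secondary point is the sign and normalisation in the defining relation $\O(\a^\flat,w) = \a(w)$; since only the vanishing of the brackets enters the argument, the choice of sign convention does not affect the conclusion.
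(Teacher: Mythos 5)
Your proof is correct and follows essentially the same route as the paper: both rest on formula \eqref{poie} together with the identity $\Omega(\chi_f(x),\chi_g(x))=\{f,g\}(x)$, so that coisotropy at a regular point becomes the vanishing of all brackets $\{f,g\}$ there. Your closing paragraph on passing from $V_{\text{reg}}$ to $I(V)$ via density and radicality is a welcome explicit treatment of a step the paper leaves implicit (and flags only in the remark following the proposition).
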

For the convenience of the reader we include a proof.
\begin{proof} 
Let $x \in V$ be a regular point, $v,w \in T_xV^\bot$, and $f,g \in I(V)$ two functions with $\chi_f(x)=v$,
$\chi_g(x)=w$ (using \eqref{poie}). Then
\[ \Omega(v,w)=\Omega(\chi_f(x),\chi_g(x))=\{f,g\}(x)\]
From this we see that $\{f,g\}$ vanishes at $x$ if and only if $\Omega(v,w)=0$, which means that
$T_xV^\bot \subset (T_xV^\bot)^\bot=T_xV$, that is, $V$ is coisotropic.
\end{proof}

Thus, for each of the Severi strata $D(k)$, the ideal $I(D(k))$ is involutive. 
But note that an ideal defining a coisotropic subvariety is not necessarily involutive;
the proof only shows that this holds if the ideal is radical.

\begin{conjecture}\label{conj2} For all $k=1,2,\ldots,\delta$  
(a) $\Pf_{2k}(\chi^t\O\chi)$ is involutive.
(b) $\Pf_{2k}(\chi^t\O\chi)$ is radical.
\end{conjecture}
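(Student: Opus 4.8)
The conjecture is left open, so what follows is a plan of attack; I flag at the end the step I expect to be decisive.

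\emph{Reducing (a) to (b).} The first observation is that radicality forces involutivity, so it is enough to concentrate on (b). Indeed, suppose the ideal $\Pf_{2k}(\chi^t\O\chi)$ is radical. By Theorem \ref{th111} its zero locus is the Severi stratum $D(\delta-k+1)$, so radicality says precisely that $\Pf_{2k}(\chi^t\O\chi)=I(D(\delta-k+1))$. By Theorem \ref{coiso} this stratum is coisotropic, and by the Proposition characterising coisotropy through Poisson-closedness its full ideal is then involutive. This is exactly the implication noted in the remark following that Proposition: an ideal cutting out a coisotropic set need not be involutive, but a \emph{radical} one always is. Thus (b)$\Rightarrow$(a), and the genuine content of the conjecture lies in radicality.

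\emph{An unconditional attack on (a).} Since (a) is the weaker statement, I would first try to prove it directly, without passing through (b). Write $M=\chi^t\O\chi$, with entries $M_{ij}=\O(\chi_i,\chi_j)$, and recall that $\Theta_S(-\log D)$ is closed under Lie bracket, say $[\chi_i,\chi_j]=\sum_\ell c_{ij}^\ell\chi_\ell$ with $c_{ij}^\ell\in\OO_S$. Because $\O$ is closed, the Cartan formula for $d\O(\chi_i,\chi_j,\chi_\ell)=0$ expresses the derivatives $\chi_\ell(M_{ij})$ as $\OO_S$-combinations of the entries $M_{ab}$ through the structure functions $c_{ij}^\ell$. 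Feeding this into the Poisson bracket and then into the Pfaffian expansion \eqref{formid} ought to rewrite $\{\Pf_S,\Pf_{S'}\}$ as a combination of Pfaffians of size at least $2k$, hence an element of the ideal. The obstacle is that, in the $\chi$-basis, the Poisson bivector $\O^{-1}$ is not regular: the matrix $A$ expressing the $\chi_i$ in the coordinate frame $\p/\p u_j$ satisfies $\det A=h$, an equation for $D$, while $M=A\O_0 A^{t}$ with $\O_0$ the invertible coordinate matrix of $\O$, so $\det M=h^{2}\cdot(\text{unit})$ and the bracket a priori carries a pole of order at most two along $D$. The crux of the direct approach is therefore to show that these poles cancel and that $\{\Pf_S,\Pf_{S'}\}$ is a genuine \emph{polynomial} combination of the $2k$-Pfaffians.

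\emph{Attacking (b).} For radicality I would apply Serre's criterion $R_0+S_1$ to the coordinate ring of the scheme $V(\Pf_{2k}(\chi^t\O\chi))$. Condition $R_0$ (generic reducedness) is a local computation at a generic point $u_0$ of each component of $D(\delta-k+1)$: there $C_{u_0}$ has exactly $\delta-k+1$ nodes and no other singularities, $D$ is a normal crossing of that many smooth sheets, and Theorem \ref{rank} pins down the rank of $M$ at $u_0$ exactly, so one can put $M$ in a local model and check that the Pfaffian ideal is reduced and cuts out the smooth stratum transversally. The hard condition is $S_1$, the absence of embedded primes. When $\Pf_{2k}(\chi^t\O\chi)$ happens to be Cohen-Macaulay --- as the authors verify for $D(\delta)$ in types $A_{2k}$, $E_6$, $E_8$ --- condition $S_1$ is automatic and radicality drops out of $R_0$. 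The real difficulty is that this fails in general: the $E_6$ computation shows that $D(2)$ is \emph{not} Cohen-Macaulay, so for the lower strata one must exclude embedded components by some means independent of the Cohen-Macaulay property. This is the step I expect to be decisive, and presumably why the conjecture has resisted proof. A promising route, once (a) has been established directly, is to use that $\Pf_{2k}(\chi^t\O\chi)$ is then a Poisson ideal, whose minimal primes are again Poisson and hence, by Theorem \ref{rank}, extremely restricted; one would then try to leverage this to show that no embedded primes can occur. Turning that heuristic into a proof that the ideal has no embedded components is the part I cannot at present complete.
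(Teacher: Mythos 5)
This statement is a conjecture which the paper itself leaves open (``We do not know whether in general the ideals $I_k$ are radical\ldots'' and the remark immediately following the conjecture), so there is no proof in the paper to compare against. Your reduction of (a) to (b) --- radical plus coisotropic implies involutive, via Theorem \ref{th111}, Theorem \ref{coiso} and the Poisson-closedness proposition --- is precisely the one rigorous observation the paper records about this conjecture, and your indexing $V(\Pf_{2k})=D(\delta-k+1)$ is consistent with Theorem \ref{th111}. The remainder of your text is, as you acknowledge, a programme rather than a proof (the direct attack on (a) via the Cartan formula, and Serre's criterion $R_0+S_1$ for (b), both stall at concretely identified steps), so it neither closes the conjecture nor contradicts anything in the paper; it simply goes beyond what the authors themselves establish.
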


By the theorem \ref{th1},  $(b) \implies (a)$, as vanishing ideals of coisotropic varieties are involutive. Nevertheless, involutivity of the ideals  $\Pf_{2k}(\chi^t\O\chi)$ may hold even without their being radical. 
\newline
{\bf Problem:} How to write the Poisson bracket of two Pfaffians of $\chi^t\O\chi$ as a linear combination of Pfaffians?
Is there a universal formula?


\section{The symplectic form as Extension}
The matrix $\chi^t\O\chi$ can be considered as an endormorphism of $\OO_S^\mu$ and its cokernel $N_{\O}$ 
defines a rank $2$ Cohen-Macaulay module on $\OO_D$. If the basis $\chi$ of $\Theta_S(-\log D)$ is chosen to be {\it symmetric}, as described in Subsection \ref{symiso}, then $N_\O$ sits in an exact sequence
 \beq\label{firstext}\xymatrix{0&\ar[l]\OO_\D&\ar[l]N_\O&\ar[l]\OO_\D&0\ar[l]}\eeq
In fact, we show that the extension \eqref{firstext} has a coordinate-independent meaning, depending only
on the choice of $\o$ used in the definition of the period map. 
As such it represents an element in the $\O_{\D}$-module
\[ \Ext^1_D(\OO_\D,\OO_\D)\]
and therefore an infinitesimal deformation of $\OO_\D$ as $\OO_D$-module.
We refer to $N_\O$ as the {\it intersection module}. 
For a vector field $\vartheta$, let $\vartheta^\#$ denote the contraction of
$\Omega$ by $\vartheta$.
Begin with the exact sequence
\beq\label{fses}0\leftarrow \frac{\Omega^1_S(\log D)}{\Omega_S^1}\leftarrow\frac{\Omega^1_S(\log D)}
{\Theta_S(-\log D)^\#}\leftarrow \frac{\Theta_S}{\Theta_S(-\log D)}\leftarrow 0.
\eeq
This exists for every divisor $D$ and non-degenerate 2-form $\O$.
Here the first arrow is induced by contraction with $\Omega$, which maps
$\Theta_S$ to $\Omega^1_S$ and $\Theta_S(-\log D)$ to $\Theta_S(-\log D)^\#$.
Then the exact sequence we consider is obtained from \eqref{fses}
by composing the last arrow with 
the isomorphism $k^{-1}\circ \beta:\Omega^1_S(\log D)\to\Theta_S$ 
described in Subsection \ref{symiso}, 
inducing an isomorphism
$$ \frac{\Theta_S}{\Theta_S(-\log D)}\leftarrow\frac{\Omega^1_S(\log D)}{\Omega_S^1}.$$
Since we have a canonical isomorphism $\Theta_S/\Theta_S(-\log D)\to \OO_\D$ defined by 
$dF$, we obtain the exact sequence \eqref{firstext}.
Thus provided the pairing on $\OO_{\Sigma}$ is chosen canonically, the extension class of \eqref{firstext} depends only on $F$ and on the symplectic form.

\begin{remark}{\em If we apply $k^{-1}\circ\beta$ also to the middle term of the sequence \eqref{fses}
as well as the third, we obtain \eqref{firstext} in the slightly different form
\beq\label{2ndext}0\leftarrow\frac{\Theta_S}{\Theta_S(-\log D)}\leftarrow\frac{\Theta_S}
{k^{-1}\circ\beta(\Theta_S(-\log D)^\#)}\leftarrow \frac{\Theta_S}{\Theta_S(-\log D)}\leftarrow 0.\eeq
Note that $k^{-1}\circ\beta(\Theta_S(-\log D)^\#)$ is generated over $\OO_S$ by vector fields whose components with respect to the usual basis $\p/\p u_1,\ld,\p/\p u_\mu$ are given by the columns of the matrix $\chi\Omega\chi$.
It is interesting that in all of the examples where we have made the calculations,  $k^{-1}\circ\beta(\Theta_S(-\log D)^\#)\subset \Theta_S$  {\it is a Lie sub-algebra}, evidently contained in $\Theta_S(-\log D)$. We cannot at present prove this or explain it. }
\end{remark}
\subsection{Calculation of $\Ext$ groups}
We state without proof the results of some relatively straightforward calculation of $\text{Ext}$ groups. Let $\s C$ denote the conductor ideal of the projection $n=\pi|:\D\to D$. 
\begin{lemma}\label{extcalc1}(i) Both $\Ext^1_D(\OO_\D,\OO_\D)$ and $\Ext^2_D(\OO_\D,\OO_\D)$ are 
$\OO_\D/\mathscr{C}$-modules.\vs
(ii) \vskip -30pt $$\Ext^1_{\OO_D}(\OO_{\D},\OO_{\D})\simeq 
\frac{\{\OO_{\tilde D}\text{-syzygies of }g_1,\ld, g_\mu\}}
{\OO_\D\cdot\{\OO_D\text{-syzygies of }g_1,\ld, g_\mu\}};$$
\vskip 10pt 
(iii) 
\vskip -30pt $$\Ext^2_{\OO_D}(\OO_{\D},\OO_{\D})\simeq \OO_{\D}/\mathscr{C}.$$
\end{lemma}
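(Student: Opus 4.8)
The plan is to present $\OO_\D$ as the cokernel of a matrix factorization and to read the $\Ext$-groups off the resulting $2$-periodic resolution. Using the $\OO_S$-basis $g_1,\ld,g_\mu$ of $\OO_\Sigma$, let $M=(M_{ij})$ be the matrix of multiplication by $F$, characterised by $Fg_j=\sum_i M_{ij}g_i$, so that $\OO_\D=\OO_\Sigma/F\OO_\Sigma=\coker(M)$ and $\det M=h$, the reduced equation of $D$. Since $M\cdot\operatorname{adj}M=\operatorname{adj}M\cdot M=hI$, the pair $(M,\operatorname{adj}M)$ is a matrix factorization of $h$ and yields over $\OO_D=\OO_S/(h)$ the $2$-periodic free resolution $\cdots\to\OO_D^\mu\xrightarrow{\operatorname{adj}M}\OO_D^\mu\xrightarrow{M}\OO_D^\mu\to\OO_\D\to 0$. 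Working in the symmetric basis of Subsection \ref{symiso} we may assume $M=M^t$; applying $\Hom_{\OO_D}(-,\OO_\D)$ then produces the complex $\OO_\D^\mu\xrightarrow{M}\OO_\D^\mu\xrightarrow{\operatorname{adj}M}\OO_\D^\mu\xrightarrow{M}\cdots$, whose cohomology computes the $\Ext$-groups: $\Ext^0=\ker(M)$, $\Ext^1=\ker(\operatorname{adj}M)/\im(M)$ and $\Ext^2=\ker(M)/\im(\operatorname{adj}M)$, all over $\OO_\D$. Since $F=0$ in $\OO_\D$ the vector $g=(g_1,\ld,g_\mu)^t$ satisfies $M^tg=0$, hence $Mg=0$; and as $\End_{\OO_D}(\OO_\D)=\OO_\D$ (the normalisation is its own endomorphism ring) the identity endomorphism corresponds under $\Ext^0=\Hom(\OO_\D,\OO_\D)$ to $g$, so $\ker(M)=\OO_\D\cdot g\cong\OO_\D$.

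For (i) I would instead use the short exact sequence $0\to\OO_D\to\OO_\D\to Q\to 0$ of $\OO_D$-modules with $Q=\OO_\D/\OO_D$. Because $\OO_\D$ is a maximal Cohen--Macaulay module over the Gorenstein ring $\OO_D$, one has $\Ext^i_{\OO_D}(\OO_\D,\OO_D)=0$ for $i>0$, while $\Hom_{\OO_D}(\OO_\D,\OO_D)=\mathscr{C}$. Applying $\Hom_{\OO_D}(\OO_\D,-)$ to the sequence then gives isomorphisms $\Ext^i_{\OO_D}(\OO_\D,\OO_\D)\cong\Ext^i_{\OO_D}(\OO_\D,Q)$ for $i\ge 1$. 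These groups are $\OO_\D$-modules because the source $\OO_\D$ is an $\OO_S$-algebra, and since $\mathscr{C}=\mathrm{Ann}_{\OO_D}(Q)$ acts as zero on $Q$ it annihilates $\Ext^i_{\OO_D}(\OO_\D,Q)$ by functoriality; as $\mathscr{C}\subset\OO_\D$, both $\Ext^1$ and $\Ext^2$ are $\OO_\D/\mathscr{C}$-modules, which is (i).

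For (ii) I would match the two terms of $\Ext^1=\ker(\operatorname{adj}M)/\im(M)$ with syzygy modules. The $j$-th column of $M$ is the relation $\sum_i M_{ij}g_i=0$, so $\im(M:\OO_\D^\mu\to\OO_\D^\mu)$ is exactly the $\OO_\D$-span of the $\OO_D$-syzygies of $g_1,\ld,g_\mu$, the denominator of (ii). For the numerator one must show $\ker(\operatorname{adj}M)$ equals the module of $\OO_\D$-syzygies $\{a\in\OO_\D^\mu:\sum_i a_ig_i=0\}$. Over the generic point of $\D$ the fibre $C_u$ has a single node, so $M$ has corank one and $\operatorname{adj}M=c\,gg^t$ for some $c$ nonzero at the generic point; then $\operatorname{adj}M\cdot a=c\,(\sum_i a_ig_i)\,g$, which vanishes precisely on the $\OO_\D$-syzygies. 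Extending this equality from codimension $0$ across the non-smooth locus of $\D$ uses that both modules are reflexive over the regular ring $\OO_\D$. This yields the quotient in (ii).

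For (iii) the same complex gives $\Ext^2=\ker(M)/\im(\operatorname{adj}M)=\OO_\D\cdot g/\im(\operatorname{adj}M)$, using $\ker(M)=\OO_\D g$ from the first paragraph and $\im(\operatorname{adj}M)\subseteq\ker(M)$. Hence $\Ext^2\cong\OO_\D/\mathrm{Ann}[g]$, where $a[g]=0$ means $a\,g\in\im(\operatorname{adj}M)$. The content of the statement is the identification $\mathrm{Ann}[g]=\mathscr{C}$, and this is the step I expect to be the main obstacle: it is essentially the classical fact that the conductor of the normalisation $\D\to D$ is generated by the adjugate (the ``different'') of the multiplication matrix of $F$. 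Proving both inclusions---$\mathscr{C}\,g\subseteq\im(\operatorname{adj}M)$ and, conversely, $a\,g\in\im(\operatorname{adj}M)\Rightarrow a\in\mathscr{C}$---requires relating the residue pairing on $\OO_\Sigma$ used in Subsection \ref{symiso} to the trace pairing of the finite map $\D\to D$, so that $\operatorname{adj}M$ is recognised as the trace/different form. Granting this, $[g]$ has annihilator $\mathscr{C}$ and $\Ext^2\cong\OO_\D/\mathscr{C}$, which also re-proves the case $i=2$ of (i).
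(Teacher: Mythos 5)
The paper states this lemma explicitly without proof (``We state without proof the results of some relatively straightforward calculation of $\text{Ext}$ groups''), so there is no argument of the authors to compare yours against; I can only assess the proposal on its own terms. Your strategy --- present $\OO_\D$ over $\OO_D$ by the symmetric matrix factorization $(M,\operatorname{adj}M)$ of $h=\det M$ and read the $\Ext$ groups off the resulting $2$-periodic complex --- is the natural one and is surely what the authors intend; parts (i) and (ii) come out correctly. Two remarks on (ii). First, ``both modules are reflexive'' is not quite the right reason for passing from the generic point of $\D$ to all of $\D$: reflexive submodules of a free module that agree only in codimension $0$ need not coincide. What you actually need, and have, is that both submodules are saturated, i.e.\ $\OO_\D^\mu/\ker(\operatorname{adj}M)$ embeds in $\OO_\D^\mu$ and $\OO_\D^\mu/\{\text{syzygies}\}$ embeds in $\OO_\D$, so both quotients are torsion-free and two such submodules agreeing generically agree. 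Second, you can dispense with the generic argument entirely: since $\operatorname{adj}M$ is symmetric and its columns lie in $\ker M=\OO_\D\cdot g$, writing $(\operatorname{adj}M)_{ij}=c_jg_i$ in $\OO_\D$ and using $g_\mu=1$ gives $c_j=c_\mu g_j$, hence $\operatorname{adj}M=c_\mu\,gg^t$ identically over $\OO_\D$; as $\OO_\D$ is a domain and $c_\mu\neq 0$, $\ker(\operatorname{adj}M)$ is exactly the $\OO_\D$-syzygy module.

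The genuine gap is the one you flag in (iii), the identification $\mathrm{Ann}[g]=\mathscr{C}$, but you misdiagnose what it requires. The computation above shows $\im(\operatorname{adj}M)=\bigl((\operatorname{adj}M)_{\mu 1},\ld,(\operatorname{adj}M)_{\mu\mu}\bigr)\cdot g$, so the missing statement is precisely that the conductor of $n:\D\to D$ is the ideal generated by the entries of the row of $\operatorname{adj}M$ indexed by the generator $g_\mu=1$. This is the theorem of Mond and Pellikaan on Fitting ideals (valid for any finite, generically one-to-one map from a Cohen--Macaulay space onto a hypersurface, with presentation matrix $M$ and first generator $1$); one inclusion, $(\operatorname{adj}M)_{\mu j}\in\mathscr{C}$, already follows from your symmetry computation since $(\operatorname{adj}M)_{\mu j}g_i=(\operatorname{adj}M)_{ij}\in\OO_D$, and the reverse inclusion is proved directly from the presentation. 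In particular no comparison of the residue pairing with a trace form is needed, so the step you expected to be the main obstacle is a citable classical fact; but as written, part (iii) of your argument is not complete.
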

\begin{proposition}\label{extcalc2} $\Ext^1_D(\OO_\D,\OO_\D)$ is a maximal Cohen-Macaulay module over $\OO_{\D}/\mathscr{C}$
presented by the matrix $\widetilde \chi$ obtained from the symmetric matrix $\chi$ of the basis for 
$\Theta_S(-\log D)$ by deleting its last row and column. 
\end{proposition}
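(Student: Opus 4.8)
The plan is to turn the abstract group $\Ext^1_D(\OO_\D,\OO_\D)$ into an explicit cokernel by exploiting the symmetric matrix factorisation that already underlies the whole construction. Recall from Subsection~\ref{symiso} that, in the symmetric basis, $\OO_\Sigma$ is $\OO_S$-free on $g_1,\ld,g_\mu$ with $g_\mu=1$, that multiplication by $F$ has matrix $\chi$ in the dual basis, and that $\OO_\D=\OO_\Sigma/(F)=\coker(\chi\colon\OO_S^\mu\to\OO_S^\mu)$. Since $\det\chi$ is a unit times the equation $h$ of $D$, the pair $(\chi,\mathrm{adj}\,\chi)$ is a matrix factorisation of $h$, so $\OO_\D$ is a maximal Cohen--Macaulay $\OO_D$-module with $2$-periodic resolution $\cdots\to\OO_D^\mu\xrightarrow{\mathrm{adj}\,\chi}\OO_D^\mu\xrightarrow{\chi}\OO_D^\mu\to\OO_\D\to0$; this is the backbone of the argument.

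First I would invoke Lemma~\ref{extcalc1}(ii), presenting $\Ext^1$ as the $\OO_\D$-syzygies of $g_1,\ld,g_\mu$ modulo the $\OO_\D$-span of the $\OO_D$-syzygies. The latter are exactly the defining relations of $\OO_\D$, namely the columns of $\chi$ reduced mod $h$ (the identity $hI=\chi\cdot\mathrm{adj}\,\chi$ shows $h\OO_S^\mu\subseteq\im\chi$, so none are lost), so their span is $\im(\chi\colon\OO_\D^\mu\to\OO_\D^\mu)$ and $\Ext^1\cong\text{Syz}_{\OO_\D}(g_\bullet)/\im\chi$. Now the unit generator $g_\mu=1$ enters: any $\OO_\D$-syzygy $(a_1,\ld,a_\mu)$ is determined by $(a_1,\ld,a_{\mu-1})$ via $a_\mu=-\sum_{i<\mu}a_ig_i$, so projection off the last coordinate identifies $\text{Syz}_{\OO_\D}(g_\bullet)$ with $\OO_\D^{\mu-1}$; under it the $j$-th column of $\chi$ (a syzygy, since $\sum_i\chi_{ij}g_i=F\check g_j=0$ in $\OO_\D$) maps to its first $\mu-1$ entries. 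Hence $\Ext^1\cong\coker(\widehat\chi\colon\OO_\D^\mu\to\OO_\D^{\mu-1})$, where $\widehat\chi$ is $\chi$ with its last row deleted.

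By Lemma~\ref{extcalc1}(i), $\Ext^1$ is annihilated by $\mathscr{C}$, so it equals $\coker(\widehat\chi)$ taken over $\OO_\D/\mathscr{C}$; it then suffices to show that the entries $\chi_{i\mu}$ ($i<\mu$) of the last column lie in the conductor $\mathscr{C}$, for then that column vanishes over $\OO_\D/\mathscr{C}$ and may be dropped, replacing $\widehat\chi$ by the square matrix $\widetilde\chi$. These entries are $\langle\check g_i,F\check g_\mu\rangle$, and the point is that $g_\mu=1$ forces its dual $\check g_\mu$ to be a unit multiple of the relative Hessian; I would try to identify $\langle\check g_i,F\check g_\mu\rangle$ with a member of $\mathscr{C}$ by combining this Hessian description with the residue pairing and a concrete description of the conductor of $\D\to D$. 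This is the step I expect to be the main obstacle, since it is exactly where the distinguished generator $g_\mu=1$ and the fine structure of $\mathscr{C}$ have to be reconciled. Granting it yields $\Ext^1\cong\coker(\widetilde\chi\colon(\OO_\D/\mathscr{C})^{\mu-1}\to(\OO_\D/\mathscr{C})^{\mu-1})$.

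For maximal Cohen--Macaulayness I would use that $D$ is a hypersurface in the smooth base $S$, hence Gorenstein, over which $\OO_\D$ is MCM; consequently $\Ext^{\ge1}$ is $2$-periodic and, by the symmetry $\chi^t=\chi$, the dual complex is self-dual. Combining this with Lemma~\ref{extcalc1}(iii), which identifies the neighbouring group as $\Ext^2\cong\OO_\D/\mathscr{C}$, I would deduce that $\OO_\D/\mathscr{C}$ is Cohen--Macaulay and that $\Ext^1$, presented over it by the square matrix $\widetilde\chi$, is a maximal Cohen--Macaulay $\OO_\D/\mathscr{C}$-module. The cusp $A_2$ ($\mu=2$), where $\widetilde\chi=(2u_1)$ and $\Ext^1\cong\OO_\D/\mathscr{C}=\CC\{t\}/(t^2)$, checks every step and is a useful guide.
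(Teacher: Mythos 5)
The paper states this proposition explicitly without proof, so there is no argument of the authors' to compare yours against; I can only assess your proposal on its merits. Most of it is sound and is surely close to the intended ``straightforward calculation'': the matrix factorisation $(\chi,\mathrm{adj}\,\chi)$ of the equation of $D$, the reading of Lemma \ref{extcalc1}(ii), and the use of $g_\mu=1$ to identify $\mathrm{Syz}_{\OO_{\D}}(g_\bullet)$ with $\OO_{\D}^{\mu-1}$ and hence $\Ext^1$ with $\coker\bigl(\widehat\chi:\OO_{\D}^\mu\to\OO_{\D}^{\mu-1}\bigr)$, $\widehat\chi$ being $\chi$ minus its last row, are all correct. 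But the step you flag as the main obstacle --- that the entries $\chi_{i\mu}$, $i<\mu$, lie in the conductor $\mathscr C$, so that the last column dies in $\OO_{\D}/\mathscr C$ --- is not merely unproven: it is false. Take $A_4$, where $\chi_{14}=25d$. On $\D$ one has $d=4x^5+2ax^3+bx^2\neq 0$, of weight $10$, whereas $\mathscr C$ is the first Fitting ideal of $n_*\OO_{\D}$, generated by the $3\times 3$ minors of $\chi$, all of weight at least $24$; equivalently, one checks directly that $d\cdot x\notin\OO_D$ by comparing weight-$12$ pieces, so $d\notin\mathscr C$. Your $A_2$ test happens to pass ($3b=6t^3\in(t^2)$), which is why the error is invisible there.

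The conclusion you want is nevertheless true, and the correct reason is already in your own text: you observed that $\sum_j\chi_{ij}g_j=F\check g_i\equiv 0$ in $\OO_{\D}$. Since $g_\mu=1$, this identity reads $\chi_{i\mu}=-\sum_{j<\mu}g_j\,\chi_{ij}$ in $\OO_{\D}$ for every $i$, i.e.\ the last column of $\widehat\chi$ is the $\OO_{\D}$-linear combination $-\sum_{j<\mu}g_j\cdot(\text{column }j)$ of the remaining columns. It is therefore redundant in the presentation over $\OO_{\D}$ itself --- no appeal to the conductor is needed to drop it --- and one gets $\Ext^1\cong\coker\bigl(\widetilde\chi:\OO_{\D}^{\mu-1}\to\OO_{\D}^{\mu-1}\bigr)$, which descends to $\OO_{\D}/\mathscr C$ by Lemma \ref{extcalc1}(i). (You can verify for $A_4$ that $25d=-\bigl(10a\cdot x^3+15b\cdot x^2+20c\cdot x\bigr)$ on $\D$.) I would also tighten the final Cohen--Macaulayness step, which as written is only a sketch: rather than invoking periodicity and self-duality, note that $\OO_{\D}$ is regular of dimension $\mu-1$ and $\widetilde\chi$ is a square matrix with nonzero determinant, so $\Ext^1$ has projective dimension $1$ over $\OO_{\D}$, hence depth $\mu-2$ by Auslander--Buchsbaum, which equals $\dim\OO_{\D}/\mathscr C$; this gives maximal Cohen--Macaulayness directly.
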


In \cite{MR3009540} it is shown that if $n:\D\to D$ has corank 1 then $\coker\widetilde\chi
\simeq \pi_*\OO_{D^2(n)}$, where, by $D^2(n)$, we mean the double-point scheme of the map $n$:
$$D^2(n)=\text{closure}\{(x_1,x_2)\in\D\times\D:x_1\neq x_2, n(x_1)=n(x_2)\}.$$
The isomorphism holds only if $n$ has corank $1$. The map $n:\D\to D$, normalising the discriminant in the base of a versal deformation,  has corank $1$ exactly for the $A_\mu$ singularities.  Thus, for
the $A_\mu$, and only for these, $\Ext^1_D(\OO_\D,\OO_\D)\simeq \OO_{D^2(n)}$.

\section{Computations and Examples}\label{compex}
It was described in \cite{Giv88} how the symplectic form $\Omega$ can be computed in the case of irreducible quasi-homogeneous curve singularities. 
The projective closure of such a curve has a unique point at infinity $\infty$.

\begin{proposition}
Let $C$ be a curve, $\infty \in C$ a smooth point and $\o$, $\eta$ two 
meromorphic differential form, holomorphic on $C \setminus \{\infty\}$.
Then the intersection form of the cohomology classes $[\o], [\eta] \in H^1(C)$
is 
\[I([\o],[\eta])=2 \pi i Res_{\infty}(\alpha \eta)\]
where $\alpha$ is a meromorphic function in a neighbourhood of $\infty$ with
$d\alpha =\o$.
\end{proposition}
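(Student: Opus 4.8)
The plan is to reduce the statement to the de Rham description of the intersection form recorded in Section 4.2 and then to convert the resulting boundary integral into a residue by the residue theorem. For holomorphic representatives that description reads $I([\o],[\eta])=-\int_{\partial C}\alpha\,\eta$, where $\alpha$ is a primitive of the \emph{first} form on a collar of $\partial C$, that is $d\alpha=\o$. In the quasi-homogeneous situation the smooth compactification $\overline{C}$ of the affine curve is obtained by adjoining the single smooth point $\infty$, and the Milnor fibre $C$ is identified with $\overline{C}$ with a small open disk $D_\infty$ about $\infty$ removed. Under this identification the boundary $\partial C$ is exactly a small circle encircling $\infty$, the collar of $\partial C$ is a punctured neighbourhood of $\infty$, and the two meanings of $\alpha$ agree there.

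Next I would check that $\text{Res}_\infty(\alpha\,\eta)$ is well defined. Since $\o$ is holomorphic on all of $C=\overline{C}\setminus\{\infty\}$, its only possible pole on the compact surface $\overline{C}$ is at $\infty$, so the residue theorem on $\overline{C}$ forces $\text{Res}_\infty\o=0$. Hence $\o$ admits a single-valued meromorphic primitive $\alpha$ in a punctured neighbourhood of $\infty$, and $\alpha\,\eta$ is a meromorphic $1$-form there with a well-defined residue. The same argument gives $\text{Res}_\infty\eta=0$, which shows that the constant ambiguity in $\alpha$ does not affect the boundary integral, so the formula is unambiguous.

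Finally I would apply the residue theorem with careful attention to orientation. Giving $\infty$ the standard orientation of its coordinate disk $D_\infty$, the curve $C=\overline{C}\setminus D_\infty$ induces on $\partial C$ the orientation opposite to $\partial D_\infty$, whence
\[\int_{\partial C}\alpha\,\eta=-\oint_{\partial D_\infty}\alpha\,\eta=-2\pi i\,\text{Res}_\infty(\alpha\,\eta).\]
Substituting into $I([\o],[\eta])=-\int_{\partial C}\alpha\,\eta$ gives $I([\o],[\eta])=2\pi i\,\text{Res}_\infty(\alpha\,\eta)$, as asserted.

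The residue computation itself is routine; the steps requiring genuine care are the orientation bookkeeping and the vanishing $\text{Res}_\infty\o=0$, without which $\alpha$ would be multivalued and the residue of $\alpha\,\eta$ meaningless. I expect the only real subtlety to be the transition from the $C^\infty$ collar primitive used in Section 4.2 to the meromorphic primitive near $\infty$: these solve the same equation $d\alpha=\o$ on the punctured neighbourhood and hence differ by a constant, which by the vanishing of $\text{Res}_\infty\eta$ drops out of the final expression.
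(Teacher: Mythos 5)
Your proof is correct and follows essentially the same route as the paper's: both rest on the bump-function de Rham representative (the construction of Section 4.2) together with Stokes' theorem, converting the pairing into a boundary integral on a small circle around $\infty$ and hence into $2\pi i\,\mathrm{Res}_\infty(\alpha\eta)$, with the same orientation bookkeeping. Your explicit verification that $\mathrm{Res}_\infty\omega=0$ (so that the meromorphic primitive $\alpha$ is single-valued near $\infty$ and the residue is well defined) is a point the paper's proof leaves implicit, and is worth recording.
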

\begin{proof}
Choose two small open discs $U \subset V \subset C$ around
$\infty$, and a $C^{\infty}$ bump function $\rho$ on $C$, equal to $1$ on $U$
and $0$ outside $V$. Choose a function $\alpha$ meromorphic on $V$ with $d \alpha=\o$. 
Then $\o -d(\rho \alpha)$ is a $C^{\infty}$ compactly supported form, cohomologous to $[\o]$. Using  $\o \wedge \eta =0$, we find
\[I([\o],[\eta])=-\int_C d(\rho \alpha)\wedge\eta =-\int_U d(\rho \alpha \cdot \eta) \]
and by Stokes theorem
\[ -\int_U d(\rho \alpha \cdot \eta) =-\int_{\partial U} \alpha \eta\]
which, noticing the reverse of orientation in the boundary, gives the
above formula.
 \end{proof}

This proposition can be used to calculate intersections using Laurent-series
exapansions. If the curve $C$ is given by an affine equation $f(x,y)=0$ and
has a single point at infinity, we can find a Laurent parametrisation of $C$
around $\infty$
\[ x(t),y(t) \in \CC[[t]][1/t] \]
If $\o=A(x,y)dx$ and $\eta=B(x,y) dx$ are the differential forms on $C$, then
by substitution we obtain expansions
\[ \o= a(t) dt, \eta=b(t) dt\]
where $a(t),b(t) \in \CC[[t]][1/t]$ are Laurent series. Integrating up one we find
\[\alpha(t)=\int a(t)dt \in \CC[[t]][1/t]\]
and we can compute the cohomological intersection as:
\[I([\o],[\eta])=Res_{0} \alpha(t)b(t)dt\]

\begin{proposition}{\em(\cite{gv82})} Suppose that $f$ is quasihomogeneous. Then for $\o=dx\wedge dy$, the period map $P_\o$ is  non-degenerate.
\end{proposition}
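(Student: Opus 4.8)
The plan is to prove non-degeneracy of $P_\o$ by exploiting the quasihomogeneity to reduce everything to the behaviour of the Gauss-Manin connection and the intersection form at the single point at infinity. Recall from Theorem \ref{periodsurject} that non-degeneracy of $\eta$ (here $\eta=dx\wedge dy$ read as a relative $1$-form via the Gelfand-Leray construction, or simply as a section of $\s H'$) is equivalent to the statement that $\nabla P_{\eta,u}:T_uS\to \s H'_u$ is an isomorphism for all $u\in S\ssm D$. So the whole problem is to show that the derivative of the period map has full rank at each smooth fibre.

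First I would use the $\CC^*$-action coming from quasihomogeneity. When $f$ is quasihomogeneous, the base $S=(\CC^\mu,0)$ carries a weighted $\CC^*$-action under which $F$ is homogeneous, and the form $\o=dx\wedge dy$ has a definite weight; consequently the period integrals $\int_{\g_i(s)}\eta$ are (multivalued) quasihomogeneous functions on $S\ssm D$. The key consequence is that $P_\eta$ is equivariant for these actions up to a scalar weight, so its rank is constant along $\CC^*$-orbits. This means it suffices to check non-degeneracy at a single generic point of each orbit, and in fact — pushing the orbit toward the large-scale limit — to understand the periods and their derivatives in terms of data on the smooth projective model $\overline C$ with its one point $\infty$.

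The main technical step is then the residue computation. Using the preceding Proposition, the intersection pairing $I([\o],[\eta])=2\pi i\,\mathrm{Res}_\infty(\a\eta)$ together with the Laurent-parametrisation formula $I([\o],[\eta])=\mathrm{Res}_0\,\a(t)b(t)\,dt$ lets me write the Gram matrix of the pulled-back form $\O$ entirely in terms of residues at $\infty$ of products of the basis $1$-forms $\o_i=dF(\chi_i)$-related differentials. I would choose the basis $g_1,\ld,g_\mu$ of the Jacobian algebra to be quasihomogeneous monomials, so that the corresponding differential forms $g_i\,\o$ have distinct weights and their Laurent expansions at $\infty$ occupy distinct ranges of exponents. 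The residue pairing $\mathrm{Res}_0(\a_i(t)\,b_j(t)\,dt)$ is then governed by the requirement that the total exponent be $-1$, which forces the pairing matrix to be (anti)triangular with respect to the weight-ordering of the basis, with nonzero anti-diagonal entries. An (anti)triangular matrix with nonzero anti-diagonal is invertible, so $\O$ is non-degenerate.

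The hard part will be controlling the off-diagonal/error terms precisely enough to guarantee the anti-diagonal entries are genuinely nonzero rather than accidentally vanishing, and justifying that the weight-filtration argument actually produces a nonsingular leading form rather than merely a triangular shape. Concretely, I must show that for the top-weight and bottom-weight pairs the residue $\mathrm{Res}_0(\a_i b_j\,dt)$ does not vanish; this reduces to a nonvanishing statement about explicit Laurent coefficients of the quasihomogeneous parametrisation $x(t),y(t)$, which is where quasihomogeneity is essential because it pins down the exponents exactly. I would handle this by relating the pairing to the perfect residue pairing $\R$ on $\Omega^{n+1}_F$ from Subsection \ref{symiso}: since that pairing is already known to be perfect, the only thing left is to verify that the intersection form $I$ and the residue pairing $\R$ agree, up to an invertible weighted factor, on the associated graded of the weight filtration. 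Once this comparison is in place, perfectness of $\R$ transports directly to non-degeneracy of $\O$, completing the proof.
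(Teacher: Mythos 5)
The paper offers no proof of this proposition --- it is quoted from \cite{gv82} --- so your sketch has to stand on its own. Its overall shape (reduce non-degeneracy of $P_\o$ to invertibility of the Gram matrix $\bigl(I(\nabla_{\vt_i}\o,\nabla_{\vt_j}\o)\bigr)$, compute the entries as residues at the unique point at infinity, and use the weight grading to produce an anti-triangular leading form) is the right circle of ideas and is faithful to the Givental'--Varchenko/K.~Saito picture. But two steps do not hold up as written. First, the $\CC^*$-reduction is much weaker than you claim: the orbits of the weighted action on $S\ssm D$ are one-dimensional, so ``rank constant along orbits'' still leaves a $(\mu-1)$-parameter family of points to check, and ``pushing the orbit toward the large-scale limit'' does not degenerate $C_u$ to the projective model --- along an orbit all fibres are isomorphic up to rescaling, so nothing new appears in the limit. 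The reduction that actually works is to observe that the determinant of the $\OO_S$-linear map $\Theta_S\to\s H''$, $\vt\mapsto\nabla_\vt\o$, between free modules of rank $\mu$, is a single-valued quasihomogeneous function on $S$ whose weight, for the minimal-weight form $\o=dx\wedge dy$, is zero; it is therefore a constant, and one only needs its value at one point (or its leading term at the origin). You never compute this weight, and without it your equivariance argument reduces nothing.

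Second, and more seriously, the step you defer to the end is the theorem itself. You propose to guarantee the nonvanishing of the anti-diagonal residues by checking that the intersection form $I$ agrees, on the associated graded of the weight filtration, with the perfect residue pairing $\R$ on $\Omega^{n+1}_F$ ``up to an invertible weighted factor.'' But $\R$ is Grothendieck local duality on $\OO_\Sigma$, a purely algebraic pairing, while $I$ is the topological intersection form on $H^1(C_u)$; the assertion that the leading term of $I$ under the period map is the Grothendieck residue form is precisely K.~Saito's higher-residue-pairing statement (equivalently Varchenko's theorem on the leading term of the period map), and neither the paper nor your sketch supplies a proof of it. So the crux you correctly isolate --- that the anti-diagonal entries are genuinely nonzero --- is in the end assumed rather than established. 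To make the argument honest you would either have to prove that comparison, or bypass the intersection form entirely and show directly that the weight-zero determinant of $\vt\mapsto\nabla_\vt\o$ has nonzero leading coefficient, which is the standard route.
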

Calculations referred to in the remainder of this section were carried out using {\it Macaulay 2}, \cite{M2}.
\subsection*{Case $A_4$}
We consider the versal deformation of $A_4$ given by 
$$F(x,a,b,c,d)=y^2+x^5+ax^3+bx^2+cx+d.$$
We take the symmetric basis for $\Theta_S(-\log D)$ with Saito matrix \beq
\chi:=\begin{pmatrix}10a&15b&20c&25d\\15b&-6a^2+20c&-4ab+25d&-2ac\\
20c&-4ab+25d&-6b^2+10ac&-3bc+15ad\\
25d&-2ac&-3bc+15ad&-4c^2+10bd
\end{pmatrix}
\eeq
The symplectic form pulled back by the period mapping induced by the 1-form $ydx$ is  
\beq\label{omA4}\Omega=ada\wedge db+ da\wedge dd+3db\wedge dc.\eeq
Therefore the ideal of entries of the matrix $\chi\Omega\chi$, defining the $\delta$-constant stratum $D(2)$,
is  generated by  
\beq\label{idD(2)A4}a^4+\frac{27}{4}ab^2-9a^2c+20c^2-\frac{25}{2}ad, \quad
a^3b+\frac{27}{4} b^3-9abc-10a^2d+50cd\eeq

and 
$$a^3c+\frac{27}{4}b^2c-4ac^2-20abd+\frac{125}{4}d^2$$
\subsection*{Case $A_6$}
A versal deformation of $A_6$ is given by
$$F(x,a,b,c,d,e,f)=x^7+ax^5+bx^4+cx^3+dx^2+ex.$$
We take the basis of $\Theta_S(-\log D)$ with Saito matrix
$$\scriptsize{\left(
\begin{array}{cccccc}
2a & 3b &        4c     &        5d    &          6e   &           7f   \\
3b & -\frac{10}{7}a^2+4c & -\frac{8}{7}ab+5d    &  -\frac{6}{7}ac+6e    &   -\frac{4}{7}ad+7f  &     -\frac{2}{7}ae     \\
4c & -\frac{8}{7}ab+5d  & -\frac{12}{7}b^2+2ac+6e & -\frac{9}{7}bc+3ad+7f &   -\frac{6}{7}bd+4ae  &
    -\frac{3}{7}be+5af \\
5d & -\frac{6}{7}ac+6e& -\frac{9}{7}bc+3ad+7f  & -\frac{12}{7}c^2+2bd+4ae & -\frac{8}{7}cd+3be+5af  & -\frac{4}
{7}ce+4bf \\
6e &-\frac{4}{7}ad+7f & -\frac{6}{7}bd+4ae &    -\frac{8}{7}cd+3be+5af&  -\frac{10}{7}d^2+2ce+4bf &-\frac{5}{7}de
+3cf \\
7f  & -\frac{2}{7}ae  &   -\frac{3}{7}be+5af   &  -\frac{4}{7}ce+4bf   &   -\frac{5}{7}de+3cf   &   -\frac{6}{7}e^2+2df 
\end{array}
\right)}
$$
and symplectic form
$$\O=\begin{pmatrix}
0  &   -3a^2 -c & -6b & 9a &0& -3 \\
3a^2+c & 0  &    -5a & 0 &  -5 & 0\\  
6b  &  5a  &   0 &  -15 & 0 &  0\\  
-9a &  0   &   15 & 0 &  0 & 0\\  
0  &   5   &   0 &  0 &  0 & 0\\  
3  &   0  &    0  & 0 &  0 & 0
\end{pmatrix}
$$  
Each of the ideals $\Pf_{2\ell}$ is Poisson-closed, and defines 
a Cohen-Macaulay variety of codimension $3-\ell+1$.
\vs
3. For $A_8$, each of the ideals $\Pf_{2\ell}$ is Poisson-closed, and defines 
a Cohen-Macaulay variety of codimension $4-\ell+1$. 
\vs  
\subsection*{Case $E_6$ and $E_8$}
A versal deformation of $E_6$ is given by 
$$F(x,y, a,b,c,d,e,f)=x^3+y^4+ axy^2+bxy+cy^2+dx+ey+f.$$
We take the basis of $\Theta_S(-\log D)$ with symmetric Saito matrix $\chi$ equal to 
$$\scriptsize{
\left(\begin{array}{llll}
2a & 5 b & 6c & 8d\\ 

5b &\frac{-a^4}{6}-4ac+8d& \frac{a^2b}{2}+9e &-\frac{a^3b}{12}-\frac{3bc+ae}{2}\\

6c & \frac{a^2b}{2}+9e & -\frac{5b^2+2a^2c+10ad}{3} & +\frac{7ab^2}{12}-\frac{4a^2d}{3}+12f \\

8d & -\frac{a^3b}{12}-\frac{3bc+ae}{2} 
& \frac{7ab^2}{12}-\frac{4a^2d}{3}+12f & -\frac{a^2b^2}{24}+4cd-\frac{7be}{2}+6af \\

9e & \frac{ab^2-a^3c}{6}+\frac{a^2d-9c^2}{3}+12f & \frac{7abc}{6}-\frac{13bd+4a^2e}{3} & \frac{5b^3-a^2bc}{12}-\frac{7abd}{6}-\frac{3ce}{2} \\

12f & \frac{abd}{6}-\frac{a^3e}{12}-\frac{3ce}{2} & -\frac{8d^2}{3}+\frac{7abe}{12}-2a^2f & 
\frac{10b^2d-a^2be}{24}-\frac{4ad^2}{3}-\frac{9e^2}{4}+6cf 
\end{array}\right.}
$$
\beq\label{saie6}\left.\scriptsize{\begin{array}{rr}
9e & 12f\\
\frac{ab^2-a^3c}{6}+\frac{a^2d-9c^2}{3}+12f & \frac{abd}{6}-\frac{a^3e}{12}-\frac{3ce}{2}\\

 \frac{7abc}{6}-\frac{13bd}{3}-\frac{4a^2}{3}e & -\frac{8d^2}{3}+\frac{7abe}{12}-2a^2f\\
 
\frac{5b^3-a^2bc}{12}-\frac{7abd}{6}-\frac{3ce}{2} &

\frac{10b^2d-a^2be}{24}-\frac{4ad^2}{3}-\frac{9e^2}{4}+6cf\\
 \frac{4b^2c}{3}-\frac{a^2c^2}{6}+\frac{8acd-8d^2-5abe-6a^2f}{3}& \frac{bcd}{2}+\frac{5b^2e-a^2ce}{12}+\frac{5ade}{6}-3abf\\

 \frac{bcd}{2}+\frac{5b^2e-a^2ce}{12}+\frac{5ade}{6}-3abf & -\frac{4cd^2}{3}+\frac{11bde}{6}-\frac{a^2e^2}{24}-{b^2f-2adf}
 \end{array}}\right)
 \eeq
The symplectic form $\Omega$ has matrix
\beq\label{omegae6}
\begin{pmatrix}
0& -\frac{1}{15}ab & \frac{1}{5}c & \frac{2}{15}a^2 & 0& \frac{1}{5}\\
\frac{1}{15}ab & 0& 0& 0 &\frac{1}{2} & 0\\
-\frac{1}{5}c & 0& 0& 1 & 0 & 0\\
-\frac{2}{15}a^2 & 0& -1 & 0 & 0 & 0\\
0  & -\frac{1}{2} & 0 & 0 & 0 & 0\\
-\frac{1}{5} & 0  & 0& 0 & 0 & 0
\end{pmatrix}
\eeq
The ideal of $2\times 2$ Pfaffians (i.e. the ideal of entries) of $\chi\Omega\chi$, defining the $\delta$-constant stratum, is Cohen-Macaulay of codimension 3, and Poisson-closed. Below we comment on the computations involved in proving Cohen-Macaulayness.
The ideal $J$ of $4\times 4$ Pfaffians is also Poisson closed, and has codimension 2 but projective dimension 3. 

For both $E_6$ and $E_8$ we check the Cohen Macaulay property for the ideal generated by the entries in the matrix $\chi\Omega\chi$ using the {\it Depth} package of {\it Macaulay 2}, \cite{M2}. To show that this ideal is radical, we use the result of \cite{FGvS}, that the geometric degree of $D(\delta)$ is equal to the Euler characteristic of the compactified Jacobian.  This Euler characteristic is calculated in \cite{Pio07}: for $E_6$ it is 5 and for $E_8$ 7. Using {\it Singular} we computed the algebraic degree of $\OO_{D(\delta)}$, as defined by the ideal of entries of $\chi\O\chi$, and found that it took these values, showing, in view of Cohen-Macaulayness, that this is the reduced structure.  
\vs
\subsection*{Betti numbers of the Severi strata for $A_{2k}$}
The following table shows the non-zero betti numbers of minimal free resolutions of  the ideals
of Pfaffians, $\Pf_{2\ell}$, of the matrix $\chi\Omega\chi$ for singularities of type $A_{2k}$ for 
$1\leq k\leq 4$.

\beq\label{table}\begin{array}{|c||c|c|c|c|}
\hline
&A_2&A_4&A_6&A_8\\
\hline
\ell&\beta_0&\begin{array}{cc}\beta_0&\beta_1\end{array}&\begin{array}{ccc}\beta_0&\beta_1&\beta_2\end{array}&\begin{array}{cccc}\beta_0&\beta_1&\beta_2&\beta_3\end{array}\\
\hline
\begin{array}{c}1\\2\\3\\4\end{array}&
\begin{array}{c}1\\ -\\- \\- \\
\end{array}&
\begin{array}{cc}3&2\\
1&- \\
-&-\\ 
-&-\\
\end{array}&
\begin{array}{ccc}6&8&3\\
5&4&-\\
1&-&-\\
-&-&-\\
\end{array}&
\begin{array}{cccc}10&20&15&4\\
15&24&10&-\\
7&6&-&-\\
1&-&-&-
\end{array}\\
\hline
\end{array}
\eeq
Since depth $+$ projective dimension = dimension $S$ and codim$\,D(j)=j$, it follows from the data in the table that for $A_{2k}$ with $k\leq 4$, each of the rings $\OO_S/\Pf_{2\ell}$, and therefore each of the Severi strata $D(k-\ell+1)=V(\Pf_{2\ell})\subset S$, is Cohen-Macaulay. 
\begin{conjecture} For all $\ell$ and $k$ with $\ell\leq k$, each of the Severi strata $D(\ell)$ in the base of a miniversal deformation of $A_{2k}$ is Cohen Macaulay.\end{conjecture}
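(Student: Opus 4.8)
The plan is to reduce the conjecture, via Theorem \ref{th111}, to a statement about Pfaffian ideals and then to produce a free resolution of the expected length. For $A_{2k}$ the curve is irreducible with one branch, so $\mu=2k$ and $\delta=k$, and Theorem \ref{th111} gives $D(\ell)=V(\Pf_{2(k-\ell+1)}(M))$ where $M=\chi^t\O\chi$ is the $2k\times 2k$ skew matrix in the symmetric basis of Subsection \ref{symiso}. Since $\mbox{codim}\,D(\ell)=\ell$, the ring $\OO_S/\Pf_{2(k-\ell+1)}(M)$ is Cohen--Macaulay exactly when this ideal is perfect of grade $\ell$, i.e. $\pd\,\OO_S/\Pf_{2(k-\ell+1)}(M)=\ell$ by Auslander--Buchsbaum. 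So it suffices to exhibit, uniformly in $k$ and $\ell$, a free resolution of length $\ell$; the Betti numbers to aim for can be read off from the pattern in \eqref{table}.

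The explicit input I would assemble first is a closed-form description, as a function of $k$, of the deformation, of the symmetric Saito matrix $\chi$ and of $\O$, following Givental's computation recalled in Section \ref{compex}; from these one forms $M$ and its Pfaffian ideals. In parallel I would record the geometric model of the strata: for the family $y^2+p(x)$ with $p$ of degree $2k+1$, one has $\delta(C_u)\ge \ell$ precisely when $p$ has at least $\ell$ double roots, so $D(\ell)$ is the closure of the image of the multiplication map $(b_1,\ld,b_\ell;r)\mapsto \prod_{i=1}^\ell(x-b_i)^2\,r(x)$, a coincident-root locus. This identifies $D(\ell)$ with a classical symmetric-function variety, pins down its codimension and reduced structure, and is the most likely route to the radicality half of Conjecture \ref{conj2}(b), which is needed so that the Pfaffian scheme $V(\Pf_{2(k-\ell+1)}(M))$ really carries the reduced stratum.

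For the Cohen--Macaulay conclusion itself I would try two complementary engines. The first is a Gr\"obner degeneration: choose a term order adapted to the quasi-homogeneous weights of the $A_{2k}$ deformation and show that the initial ideal of $\Pf_{2(k-\ell+1)}(M)$ is the Stanley--Reisner ideal of a shellable complex; since Cohen--Macaulayness passes from $\text{in}(I)$ to $I$, this would finish the argument. The second is a direct, inductive construction of the minimal free resolution, uniform in $k$ and guided by \eqref{table}, whose exactness I would verify through the Buchsbaum--Eisenbud acyclicity criterion by checking the depth conditions on the ideals of minors of the successive differentials. I would organise the induction along the chain $D(k)\subset\cd\subset D(1)=D$: the top Pfaffian is $\Pf_{2k}(M)=(h)$, the reduced equation of $D$, and one hopes to link successive Pfaffian ideals so as to propagate Cohen--Macaulayness down the chain, exploiting also the recursive appearance of the submatrix $\wt\chi$ in Proposition \ref{extcalc2}.

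The hard part will be the extreme non-genericity of $M$. The locus where a \emph{generic} $2k\times 2k$ skew matrix has rank $\le 2(k-\ell)$ has codimension $\binom{2\ell}{2}=\ell(2\ell-1)$, whereas $D(\ell)$ has codimension only $\ell$; the two already disagree at $\ell=2$, where the generic submaximal-Pfaffian ideal is Gorenstein of codimension $3$ but $D(2)$ has codimension $2$. Thus no off-the-shelf Pfaffian resolution applies, and the special structure of $\O$ --- which forces the corank of $M$ at $u$ to be the \emph{even} integer $2\delta(C_u)$ (Theorem \ref{rank}) --- must be used essentially. Since the analogous $E_6$ stratum $D(2)$ is \emph{not} Cohen--Macaulay, any successful argument must be genuinely specific to type $A$: the real obstacle is to convert the $A$-type quasi-homogeneous structure of $\chi$ and $\O$ into either the shellability of a uniform initial complex or the acyclicity of a uniform resolution, simultaneously for all $k$ and $\ell$.
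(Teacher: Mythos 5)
You should first be clear about the status of the statement: in the paper it is a \emph{conjecture}, and the paper contains no proof of it. The only evidence offered is the table \eqref{table} of Betti numbers of minimal free resolutions of the ideals $\Pf_{2\ell}(\chi^t\O\chi)$, computed in \emph{Macaulay 2} for $A_{2k}$ with $k\le 4$; combined with ``depth $+$ projective dimension $=$ $\dim S$'' and $\mbox{codim}\,D(j)=j$, this gives Cohen--Macaulayness in those finitely many cases only. Your reduction of the problem is correct and agrees with the paper's point of view: for $A_{2k}$ one has $\mu=2k$, $\delta=k$, Theorem \ref{th111} gives $D(\ell)=V\bigl(\Pf_{2(k-\ell+1)}(\chi^t\O\chi)\bigr)$ with $\mbox{codim}\,D(\ell)=\ell$, and Cohen--Macaulayness is equivalent, via Auslander--Buchsbaum, to the projective dimension of $\OO_S/\Pf_{2(k-\ell+1)}$ being $\ell$. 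You also correctly identify the essential difficulty: the matrix $\chi^t\O\chi$ is far from generic (the generic rank stratification would place $V(\Pf_{2(k-\ell+1)})$ in codimension $\ell(2\ell-1)$ rather than $\ell$), and the $E_6$ computation in Section \ref{compex}, where $D(2)$ fails to be Cohen--Macaulay, shows that any argument must exploit the type-$A$ structure in an essential way.

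However, what you have written is a research programme, not a proof, and the gap is precisely where the conjecture is hard. Neither of your two ``engines'' is carried out: you do not exhibit a term order for which the initial ideal of $\Pf_{2(k-\ell+1)}(\chi^t\O\chi)$ is the Stanley--Reisner ideal of a shellable complex, and you do not write down the differentials of the proposed length-$\ell$ resolution, so the Buchsbaum--Eisenbud acyclicity criterion has nothing to act on. The identification of $D(\ell)$ with a coincident-root locus for $y^2+p(x)$ is a sensible geometric anchor (consistent with $D(\ell)_{\text{reg}}=D(\ell A_1)$ from Section \ref{severi}), but Cohen--Macaulayness of such coincident-root loci is not a known off-the-shelf fact in the generality you need, so it does not close the argument; nor is the radicality of the Pfaffian ideals (Conjecture \ref{conj2}(b)) established, which your plan requires in order to pass between the Pfaffian scheme and the reduced stratum. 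One small inaccuracy: for $\ell=2$ the generic locus $V(\Pf_{2k-2})$ of a $2k\times 2k$ skew matrix has codimension $\binom{4}{2}=6$; the ``Gorenstein of codimension $3$'' phenomenon concerns submaximal Pfaffians of a generic skew matrix of \emph{odd} size, which is not the situation here since $\mu=2k$ is even. In short, the statement remains open both in the paper and in your proposal; your own final paragraph names the obstacle without overcoming it.
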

\def\cprime{$'$} \def\cprime{$'$} \def\cprime{$'$} \def\cprime{$'$}
  \def\cprime{$'$}
\providecommand{\bysame}{\leavevmode\hbox to3em{\hrulefill}\thinspace}
\providecommand{\MR}{\relax\ifhmode\unskip\space\fi MR }
\providecommand{\MRhref}[2]{%
  \href{http://www.ams.org/mathscinet-getitem?mr=#1}{#2}
}
\providecommand{\href}[2]{#2}

\end{document}